\newtheorem{theo+}              {Theorem}           [section]
\newtheorem{prop+}  [theo+]     {Proposition}
\newtheorem{coro+}  [theo+]     {Corollary}
\newtheorem{lemm+}  [theo+]     {Lemma}
\newtheorem{exam+}  [theo+]     {Example}
\newtheorem{rema+}  [theo+]     {Remark}
\newtheorem{defi+}  [theo+]     {Definition}
\newtheorem{clai+}  [theo+]     {Claim}
\newenvironment{theorem}{\begin{theo+}}{\end{theo+}}
\newenvironment{proposition}{\begin{prop+}}{\end{prop+}}
\newenvironment{corollary}{\begin{coro+}}{\end{coro+}}
\theoremstyle{plain} \theoremstyle{remark}
\newtheorem{remark}{Remark}
\newtheorem{example}{Example}
\def \r{\mbox{${\mathbb R}$}}
\def\E{/\kern-1.0em \equiv }
\author{Ze-Ping Wang$^{*}$, Ye-Lin Ou$^{**}$ and Qi-Long Liu $^{*}$}
\address{*Department of Mathematics,\newline\indent Guizhou
Normal University,\newline\indent Guiyang 550025,\newline\indent
People's Republic of China
\newline\indent E-mail:zpwzpw2012@126.com \;(Wang)\\\newline\indent liuqilong2008.ok@163.com (Liu)\\\newline\indent  \\\newline\indent Department of Mathematics,\newline\indent Texas A $\&$ M University-Commerce,
\newline\indent Commerce TX 75429,\newline\indent USA.\newline\indent
E-mail:yelin$\_$ou@tamu-commerce.edu \;(Ou)}
\thanks{*Supported by the Natural Science Foundation of China (No. 11861022). \\
\indent** Supported by a grant from the Simons Foundation ( 427231,
Ye-Lin Ou).}
\date{10/06/2022}
\begin{document}

\title[Harmonic and biharmonic Riemannain submersions ] {Harmonic and biharmonic Riemannain submersions from Sol space }

\subjclass{58E20, 53C12} \keywords{Harmonic map, Biharmonic maps, Riemannain submersions,  Sol space.}

\maketitle

\section*{Abstract}
\begin{quote}
{\footnotesize In this paper,  we give a complete classification of harmonic and biharmonic Riemannian submersions $\pi:(\r^3,g_{Sol})\to (N^2,h)$ from Sol space into
a surface by proving that there is neither harmonic nor biharmonic Riemannian submersion  $\pi:(\r^3,g_{Sol})\to (N^2,h)$ from Sol space
no matter what the base space $(N^2,h)$ is.  We also prove that a Riemannian submersion $\pi:(\r^3,g_{Sol})\to (N^2,h)$ from Sol space exists only when the base space is a hyperbolic space form.}
\end{quote}

\section{Introduction and Preliminaries}
All manifolds, maps, tensor fields studied in this paper are
assumed
to be smooth unless there is an otherwise statement.\\

Recall that a {\em harmonic map} $\varphi:(M, g)\to (N,
h)$ between Riemannian manifolds is a critical point of the energy functional
\begin{equation}\nonumber
E\left(\varphi,\Omega \right)= \frac{1}{2} {\int}_{\Omega}
\left|{\rm d}\varphi \right|^{2}{\rm d}x.
\end{equation}
 The Euler-Lagrange equation is given by the vanishing
of the tension filed $\tau(\varphi)={\rm
Trace}_{g}\nabla {\rm d} \varphi$ (see \cite{EL1}). Clearly,  the  map $\varphi$ is harmonic if and only if
$\tau(\varphi)={\rm Trace}_{g}\nabla {\rm d} \varphi=0$ holds identically.\\

The study of biharmonic maps as a special case of $k$-polyharmonic
maps were first  proposed   by J. Eells and L. Lemaire  in \cite{EL1}. A {\em biharmonic map}  $\varphi:(M, g)\to (N,
h)$ between Riemannian manifolds  is  a critical point of the
bienergy
\begin{equation}\nonumber
E^{2}\left(\varphi,\Omega \right)= \frac{1}{2} {\int}_{\Omega}
\left|\tau(\varphi) \right|^{2}{\rm d}x
\end{equation}
for every compact subset $\Omega$ of $M$, where $\tau(\varphi)={\rm
Trace}_{g}\nabla {\rm d} \varphi$ is the tension field of $\varphi$.  Jiang \cite{Ji} first computed the first
variation of the functional to  see that $\varphi$
is  biharmonic  if and only if its bitension field vanishes
identically, i.e.,
\begin{equation}\label{BT1}
\tau^{2}(\varphi):={\rm
Trace}_{g}(\nabla^{\varphi}\nabla^{\varphi}-\nabla^{\varphi}_{\nabla^{M}})\tau(\varphi)
- {\rm Trace}_{g} R^{N}({\rm d}\varphi, \tau(\varphi)){\rm d}\varphi
=0,
\end{equation}
where $R^{N}$ is the curvature operator of $(N, h)$ defined by
$$R^{N}(X,Y)Z=
[\nabla^{N}_{X},\nabla^{N}_{Y}]Z-\nabla^{N}_{[X,Y]}Z.$$

 We call a submanifold that is a biharmonic submanifold if the
isometric immersion that defines the  submanifold is a biharmonic
map. Analogously, a  Riemannian submersion is called a {\bf  biharmonic (respectively, harmonic) Riemannian
submersion} if the Riemannian submersion is a  biharmonic (respectively, harmonic) map. Obviously,  any harmonic map  is always biharmonic whilst biharmonic maps include harmonic maps as special cases. We use proper biharmonic maps
(respectively, submanifolds, Riemannian submersion)  to call those biharmonic maps that are not harmonic maps.\\

For harmonicity of Riemannian submersions,  one of our motivations is that the definition of Riemannian submersions, in a sense, are considered as the dual notion of  isometric immersions (i.e., submanifolds). There are many interesting examples of harmonic isometric immersions of a surface (i.e., minimal surfaces) into 3-manifolds, such as planes or catenoid in $\r^3$ or  harmonic embedding of $S^2$ into $S^3$ \cite{Sm}. On the other hand, there exist many interesting examples and classification results of  harmonic Riemannian submersions  from 3-dimensional Riemannian manifolds into a surface:
 Hopf fibration $\pi: S^3\to S^2(4)$ and the orthogonal projection $\pi: \r^3\to \r^2$ are harmonic Riemannian submersion;  there is no harmonic Riemannian submersion $\pi:H^3\to(N^2,h)$ no matter what $(N^2,h)$ is (see \cite{WO,WO4}); harmonic Riemannian submersions from Thurston's 3-dimensional geometries, 3-dimensional BCV spaces and a Berger sphere $S_{\varepsilon}^3$ have been completely classified and many explicit constructions of harmonic Riemannian submersions  were given (see \cite{WO4} for details).\\

   Since biharmonic maps are considered as the generalizations  of harmonic maps and include harmonic maps as a subset, it would be very interesting to study biharmonicity of Riemannian submersions. Based on this, we will study  biharmonicity of Riemannian submersions  from 3-dimensional Sol space into a surface in the second part of the paper.
   Biharmonic Riemannian submersions were first  studied by  Oniciuc in \cite{Oni}. In \cite{WO}, the authors first introduced so-called integrability data and then used the main tool to obtain a complete classification of biharmonic Riemannian submersions from a 3-dimensional space form into a surface. In \cite{AO}, the authors studied biharmonicity of a general Riemannian submersion and obtained biharmonic equations for Riemannian submersions with one-dimensional fibers and Riemannian submersions with basic mean curvature vector fields of fibers, and they  first used the so-called integrability data to study biharmonic Riemannian submersions from $(n+1)$-dimensional spaces with one-dimensional fibers. In \cite{Ura2}, the author  studied biharmonicity a more general setting of Riemannian submersion with a $S^1$ fiber over a compact Riemannian manifold. In \cite{GO} , the authors studied  generalized harmonic morphisms and obtained many examples of biharmonic Riemannian submersions  which are maps between Riemannian manifolds that pull back local harmonic functions to local biharmonic functions.\\

 In addition to these,  we refer the readers to  the following classification results. In 2023, the authors \cite{WO1}  classified all proper biharmonic Riemannian submersions  from BCV 3-diemnsional spaces into a surface. In a recent paper \cite{WO2}, the authors also gave complete classifications of biharmonic Riemannian submersions from
 3-dimensional Berger sphere. And also,  biharmonic Riemannian submersions  from
product spaces $M^2\times\r$ to a surface have been completely classified in \cite{WO3}.\\

Recall that Sol space is one of Thurston's eight models of 3-dimensional geometry. It is the Riemannian manifold $(\r^{3}, g_{Sol})$, where the metric can be described by $g_{Sol}=e^{2z}{\rm d}x^{2}+e^{-2z}{\rm
d}y^{2}+{\rm d}z^{2}$ with respect to Euclidean coordinates on $\r^3$.\\

 First of all, one observes that it is easy to find Riemannian submersions from Sol space. For example, the projections\\
  $\pi_1: (\r^{3}, g_{Sol})\to (\r^2, e^{2z}{\rm d}x^{2}+{\rm d}z^{2}),\;\pi_1(x,y,z)=(x,z)$, and \\
  $\pi_2: (\r^{3}, g_{Sol})\to (\r^2, e^{2z}{\rm d}y^{2}+{\rm d}z^{2}),\;\pi_2(x,y,z)=(y,z)$ are both Riemannian submersions.\\

  One may wonder whether these are harmonic or biharmonic, whether there is any harmonic or biharmonic Riemannian submersions from Sol space. In this paper, we prove that there is neither harmonic nor biharmonic Riemannian submersion  $\pi:(\r^3,g_{Sol})\to (N^2,h)$ from Sol space no matter what the base space $(N^2,h)$ is.  We also prove that a Riemannian submersion $\pi:(\r^3,g_{Sol})\to (N^2,h)$ from Sol space exists only when the base space is a hyperbolic space form.

\section{Harmonic Riemannian submersions from Sol space}
In this section, we obtain a nonexistence classification  results for harmonic Riemannian submersions from Sol space to a surface.\\
Let $(\r^{3},g_{Sol})$ denote Sol space, where the metric $g_{Sol}=e^{2z}{\rm d}x^{2}+e^{-2z}{\rm
d}y^{2}+{\rm d}z^{2}$ with respect to local coordinates
$(x,y,z)$ in $\r^{3}$. We have a defined  orthonormal basis as
\begin{equation}\notag
E_{1}=e^{-z}\frac{\partial}{\partial x},\;
E_{2}=e^{z}\frac{\partial}{\partial y},
\;E_{3}=\frac{\partial}{\partial z}.
\end{equation}
With respect to this orthonormal frame, the Lie brackets and the
Levi-Civita connection are given by:
\begin{equation}\label{So1}
[E_{1},E_{2}]=0, \;[E_{2},E_{3}]=-E_{2}, \;[E_{1},E_{3}]=E_{1},
\end{equation}
\begin{equation}\label{So2}
\begin{array}{lll}
\nabla_{E_{1}}E_{1}=-E_{3},\;\nabla_{E_{1}}E_{3}=E_{1},\;
\nabla_{E_{2}}E_{2}=E_{3},\;\nabla_{E_{2}}E_{3}=-E_{2},\\
{\rm all\;other}\;\nabla_{E_{i}}E_{j}=0,\; i,j=1,2,3.\\
\end{array}
\end{equation}
One adopts the
following notation and sign convention for Riemannian curvature
operator.
\begin{equation}
 R(X,Y)Z=\nabla_{X}\nabla_{Y}Z
-\nabla_{Y}\nabla_{X}Z-\nabla_{[X,Y]}Z,\\
\end{equation}
 the Riemannian and the Ricci curvature tensors are
given by
\begin{equation}
\begin{array}{lll}
&&  R(X,Y,Z,W)=g( R(Z,W)Y,X),\\
&& {\rm Ric}(X,Y)= {\rm Trace}_{g}R=\sum\limits_{i=1}^3 R(Y, e_i, X,
e_i)=\sum_{i=1}\limits^3 \langle R( X,e_i) e_i, Y\rangle.
\end{array}
\end{equation}
A straightforward computation gives

\begin{equation}\label{So3}
\begin{array}{lll}
 R_{1212}=g(R(E_{1},E_{2})E_{2},E_{1})=1,\;
R_{1313}=g(R(E_{1},E_{3})E_{3},E_{1})=-1,\\
R_{2323}=g(R(E_{2},E_{3})E_{3},E_{2})=-1,\;\;
{\rm all\;other}\; R_{ijkl}=0, i,j,k,l=1,2,3.\\
\end{array}
\end{equation}

Let $\pi:(\r^3,g_{Sol}) \to (N^2,h)$ be a Riemannian
submersion from Sol space with an orthonormal frame  $\{e_1,\; e_2, \;e_3\}$ on $(\r^3,g_{Sol})$  and $e_3$ being  vertical.
By a treatment similar to to those used treating Remark 1 in  \cite{WO1},   we then have the following (\ref{R1})--(\ref{GCB1}) (see \cite{WO1} for details)
\begin{equation}\label{R1}
[e_1,e_3]=f_{3}e_2+\kappa_1e_3,\;
[e_2,e_3]=-f_{3}e_1+\kappa_2e_3,\;
[e_1,e_2]=f_1 e_1+f_2e_2-2\sigma e_3.
\end{equation}
where $f_1, f_2, f_3,\;\kappa_1,\;\kappa_2\;{\rm and}\; \sigma
$ are the (generalizd) integrability
data of the Riemannian submersion $\pi$. When $f_{3}=0$,  the horizontal
distribution  $\{e_1,\; e_2\}$ are basic and $ \{f_1, f_2,
\kappa_1,\;\kappa_2,\; \sigma\}$ is  the integrability
data  of the adapted frame.\\
The Levi-Civita connection is given by
\begin{eqnarray}\notag
&&\nabla_{e_{1}} e_{1}=-f_1e_2,\;\;\nabla_{e_{1}} e_{2}=f_1
e_1-\sigma e_{3},\;\;\nabla_{e_{1}} e_{3}=\sigma \notag
e_{2},\\\label{R2} &&\nabla_{e_{2}} e_{1}=-f_2 e_{2}+\sigma
e_3,\;\;\nabla_{e_{2}} e_{2}=f_2 e_{1}, \;\;\nabla_{e_{2}}
e_{3}=-\sigma e_{1},\\\notag && \nabla_{e_{3}}
e_{1}=-\kappa_1e_{3}+(\sigma-f_{3})e_{2}, \nabla_{e_{3}} e_{2}= -(\sigma-f_{3})
e_{1}-\kappa_2 e_3, \nabla_{e_{3}} e_{3}=\kappa_1 e_{1}+\kappa_2
e_2.
\end{eqnarray}

Denoting by $e_i=\sum\limits_{j=1}^{3}a_{i}^{j}E_j,\;i=1,2,3$,
using (\ref{So2}), (\ref{So3})  and (\ref{R2}), then the Jacobi identity applied to the frame $\{e_1, e_2, e_3\}$  gives
 \begin{equation}\label{Jac}
 \begin{cases}
e_3(f_1)+(\kappa_1+f_2)f_{3}-e_1(f_{3})=0,\\
e_3(f_2)+(\kappa_2-f_1)f_{3}-e_2(f_{3})=0,\\
2 e_3(\sigma)+\kappa_1f_1+\kappa_2f_2+e_2(\kappa_1)-e_1(\kappa_2)=0,
\end{cases}
\end{equation}
and the terms of the curvature tension as follows

\begin{equation}\label{RC0}
\begin{cases}
R^{M}(e_1,e_3,e_1,e_2)=-e_1(\sigma)+2\kappa_1\sigma=-2a_{2}^{3}a_{3}^{3},\\
R^{M}(e_1,e_3,e_1,e_3)=e_1(\kappa_1)+\sigma^2-\kappa_{1}^2+\kappa_2f_1=2(a_{2}^{3})^2-1,\;\\
R^{M}(e_1,e_3,e_2,e_3)=e_1(\kappa_2)-e_3(\sigma)-\kappa_{1}f_{1}-\kappa_1\kappa_2=-2a_{1}^{3}a_{2}^{3},\;\\
R^{M}(e_1,e_2,e_1,e_2)=e_1(f_2)-e_2(f_1)-f_{1}^{2}-f_{2}^{2}+2f_{3}\sigma-3\sigma^2=2(a_{3}^{3})^2-1,\\
R^{M}(e_1,e_2,e_2,e_3)=-e_2(\sigma)+2\kappa_2\sigma=2a_{1}^{3}a_{3}^{3},\\
R^{M}(e_2,e_3,e_1,e_3)=e_2(\kappa_{1})+e_3(\sigma)+\kappa_2 f_2-\kappa_1 \kappa_2=-2a_{1}^{3}a_{2}^{3},\\
R^{M}(e_2,e_3,e_2,e_3)=\sigma^{2}+e_2(\kappa_2)-\kappa_1f_2- \kappa_2^2=2(a_{1}^{3})^2-1.
\end{cases}
\end{equation}
 Gauss curvature of the base space as

\begin{equation}\label{GCB}
K^N=e_1(f_2)-e_2(f_1)-f_1^2-f_2^2+2f_{3}\sigma,
\end{equation}

\begin{equation}\label{GCB0}
e_3(K^N)=e_3[e_1(f_2)-e_2(f_1)-f_1^2-f_2^2+2f_{3}\sigma]=0.
\end{equation}
When $f_{3}=0$, then Gauss curvature of the base space becomes

\begin{equation}\label{GCB1}
K^N=e_1(f_2)-e_2(f_1)-f_1^2-f_2^2.
\end{equation}

 Now we are ready to give the following classification of harmonic Riemannian submersions from Sol space.
\begin{proposition}(see\cite{WO1})\label{PH}
A Riemannian submersion $\pi:( M^3 , g)\to (N^2,h)$ is harmonic if and only if \;$\nabla^{M}_{e_3}e_3=0$, i.e., $\kappa_1=\kappa_2=0$.
\end{proposition}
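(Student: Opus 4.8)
The plan is to characterize harmonicity directly through the tension field of the Riemannian submersion. Since a Riemannian submersion has totally geodesic fibers precisely when the integrability datum encodes a vanishing mean curvature of the fibers, and a general fact for Riemannian submersions is that $\varphi$ is harmonic if and only if it has minimal fibers, I would compute $\tau(\pi)$ explicitly in terms of the adapted orthonormal frame $\{e_1,e_2,e_3\}$ with $e_3$ vertical. Concretely, I would start from $\tau(\pi)=\mathrm{Trace}_g\nabla\mathrm{d}\pi=\sum_{i=1}^{3}\left(\nabla^{\pi}_{e_i}\mathrm{d}\pi(e_i)-\mathrm{d}\pi(\nabla^{M}_{e_i}e_i)\right)$.

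The key simplification comes from the Riemannian submersion structure. Because $\pi$ is a Riemannian submersion, $\mathrm{d}\pi$ restricted to the horizontal distribution is an isometry and $\mathrm{d}\pi(e_3)=0$; moreover the horizontal vectors $e_1,e_2$ push forward to an orthonormal frame on $(N^2,h)$ whose connection matches the horizontal part of $\nabla^M$. Using this, the horizontal contributions $i=1,2$ cancel against the intrinsic connection on the base, so that the only surviving term in $\tau(\pi)$ is $-\mathrm{d}\pi(\nabla^{M}_{e_3}e_3)$. Therefore $\tau(\pi)=-\mathrm{d}\pi\bigl(\nabla^{M}_{e_3}e_3\bigr)$, and since $\mathrm{d}\pi$ is injective on horizontal vectors, $\tau(\pi)=0$ if and only if the horizontal part of $\nabla^{M}_{e_3}e_3$ vanishes.

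From the Levi-Civita connection formula (\ref{R2}) in the excerpt we read off $\nabla^{M}_{e_3}e_3=\kappa_1 e_1+\kappa_2 e_2$, which is already purely horizontal. Hence the harmonicity condition $\tau(\pi)=0$ is equivalent to $\nabla^{M}_{e_3}e_3=0$, i.e.\ to $\kappa_1=\kappa_2=0$. This gives both directions of the equivalence simultaneously, since the vanishing of $\kappa_1 e_1+\kappa_2 e_2$ is equivalent to $\kappa_1=\kappa_2=0$ by linear independence of $e_1,e_2$.

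I expect the main obstacle to be justifying rigorously that the horizontal terms in the trace cancel, i.e.\ that $\sum_{i=1}^{2}\left(\nabla^{\pi}_{e_i}\mathrm{d}\pi(e_i)-\mathrm{d}\pi(\nabla^{M}_{e_i}e_i)\right)=0$. This requires relating the pullback connection $\nabla^{\pi}$ on $\pi^{-1}TN$ to the Levi-Civita connection $\nabla^N$ via the horizontal projection, and using that a Riemannian submersion intertwines the horizontal Levi-Civita connection on $M$ with $\nabla^N$ up to terms that lie in the fiber; these fiber-direction terms are killed by $\mathrm{d}\pi$. Once this standard but slightly delicate identification is in place, the remaining computation is immediate from (\ref{R2}). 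Since this is a known general fact (the statement is quoted from \cite{WO1}), I would cite that reference for the cancellation and simply record the explicit evaluation $\nabla^{M}_{e_3}e_3=\kappa_1 e_1+\kappa_2 e_2$ to conclude.
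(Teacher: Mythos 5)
Your proof is correct, and there is nothing in the paper to diverge from: the paper states this proposition with only a citation to \cite{WO1} and gives no proof, so your argument supplies exactly what that citation stands in for, namely the classical fact that a Riemannian submersion is harmonic if and only if its fibers are minimal, specialized via $\tau(\pi)=-\mathrm{d}\pi\bigl(\nabla^{M}_{e_3}e_3\bigr)$ and the reading $\nabla^{M}_{e_3}e_3=\kappa_1e_1+\kappa_2e_2$ from (\ref{R2}). The one step to make fully rigorous is the cancellation you flagged: $e_1,e_2$ need not be basic (indeed, when $f_3\neq0$ they are not), but $\nabla\mathrm{d}\pi$ is tensorial, so at each point you may replace $e_i$ by a basic field $X$ with the same value there; for basic $X$ the horizontal part of $\nabla^{M}_{X}X$ is $\pi$-related to $\nabla^{N}_{\bar X}\bar X$, where $\bar X$ is the field on $N$ to which $X$ projects, giving $\nabla\mathrm{d}\pi(X,X)=0$, while $\nabla^{\pi}_{e_3}\mathrm{d}\pi(e_3)=0$ because $\mathrm{d}\pi(e_3)$ is the zero section of $\pi^{-1}TN$. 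With that in place (and noting $\nabla^{M}_{e_3}e_3$ is automatically horizontal since $e_3$ has unit length), your conclusion follows exactly as written.
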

\begin{theorem}\label{HTh}
There exists no harmonic Riemannian submersion $\pi:(\r^3,g_{Sol})\to (N^2,h)$ from Sol space no matter
what $(N^2, h)$ is.
\end{theorem}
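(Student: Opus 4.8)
The plan is to argue by contradiction, reducing harmonicity to the purely algebraic condition supplied by Proposition \ref{PH} and then playing that condition against the rigid curvature of Sol encoded in the integrability relations (\ref{RC0}). Suppose, to the contrary, that a harmonic Riemannian submersion $\pi:(\r^3,g_{Sol})\to(N^2,h)$ exists. By Proposition \ref{PH}, harmonicity is equivalent to $\nabla^{M}_{e_3}e_3=0$, i.e. $\kappa_1=\kappa_2=0$. The entire argument then consists of substituting $\kappa_1=\kappa_2=0$ into the seven curvature identities (\ref{RC0}) and extracting an impossibility; no analytic estimates, no PDE arguments, and in fact no information about the target $(N^2,h)$ will be needed.

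First I would feed $\kappa_1=\kappa_2=0$ into the two ``diagonal'' relations, those for $R^{M}(e_1,e_3,e_1,e_3)$ and $R^{M}(e_2,e_3,e_2,e_3)$. After specialization they collapse to
\[
\sigma^2=2(a_2^3)^2-1,\qquad \sigma^2=2(a_1^3)^2-1.
\]
Since $\sigma$ is a real function, the left-hand sides are nonnegative, so both $(a_1^3)^2\ge \frac12$ and $(a_2^3)^2\ge \frac12$; in particular $a_1^3\neq 0$ and $a_2^3\neq 0$, whence $a_1^3a_2^3\neq 0$.

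Next I would examine the two ``mixed'' relations, those for $R^{M}(e_1,e_3,e_2,e_3)$ and $R^{M}(e_2,e_3,e_1,e_3)$. With $\kappa_1=\kappa_2=0$ they become $e_3(\sigma)=2a_1^3a_2^3$ and $e_3(\sigma)=-2a_1^3a_2^3$ respectively, and subtracting them yields $a_1^3a_2^3=0$. This flatly contradicts the conclusion $a_1^3a_2^3\neq 0$ drawn from the diagonal relations, completing the proof. One may state the clash even more bluntly: $a_1^3a_2^3=0$ forces one of $a_1^3,a_2^3$ to vanish, and then the corresponding diagonal relation gives $\sigma^2=-1$, which is absurd.

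I expect the only delicate point to be bookkeeping rather than ideas: one must read off correctly which of the seven identities in (\ref{RC0}) survive the specialization $\kappa_1=\kappa_2=0$ and track the signs of the frame-change coefficients $a_i^3$. The genuine mechanism is the sign obstruction carried by the two diagonal curvatures $R^{M}(e_1,e_3,e_1,e_3)=2(a_2^3)^2-1$ and $R^{M}(e_2,e_3,e_2,e_3)=2(a_1^3)^2-1$: the negative sectional curvatures $R_{1313}=R_{2323}=-1$ of Sol (see (\ref{So3})) force $(a_1^3)^2$ and $(a_2^3)^2$ to be bounded below by $\frac12$ simultaneously, which is incompatible with the orthogonality-type constraint $a_1^3a_2^3=0$ coming from the off-diagonal curvatures. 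Because no property of $(N^2,h)$ enters anywhere, the nonexistence is unconditional in the base, exactly as the statement asserts.
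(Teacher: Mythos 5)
Your proof is correct and follows essentially the same route as the paper's: reduce harmonicity to $\kappa_1=\kappa_2=0$ via Proposition \ref{PH}, substitute into the curvature identities (\ref{RC0}), and set the diagonal relations $\sigma^2=2(a_2^3)^2-1$, $\sigma^2=2(a_1^3)^2-1$ against the constraint $a_1^3a_2^3=0$ to reach $\sigma^2=-1$. The only cosmetic difference is that you derive $a_1^3a_2^3=0$ internally by combining the two mixed identities for $R^{M}(e_1,e_3,e_2,e_3)$ and $R^{M}(e_2,e_3,e_1,e_3)$, whereas the paper packages this step by citing Proposition 2.2 of \cite{WO4}.
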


\begin{proof}
Let $\pi:(\r^3,g_{Sol})\to (N^2,h)$ be a Riemannian submersion with
an  orthonormal frame $\{e_1,\; e_2,\;e_3\}$, $e_3$  being vertical, and the (generalized) integrability data $\{ f_1, f_2, f_3, \kappa_1, \kappa_2, \sigma\}$.
By Proposition \ref{PH}, the Riemannian  submersion $\pi$ is harmonic if and only if $\kappa_1=\kappa_2=0$. Using (\ref{RC0}) and Proposition 2.2 in \cite{WO4}, we obtain
\begin{equation}\label{RCH}
\begin{cases}
-e_1(\sigma)=-2a_{2}^{3}a_{3}^{3},\\
\sigma^2=2(a_{2}^{3})^2-1,\;\\
-e_3(\sigma)=-2a_{1}^{3}a_{2}^{3}=0,\\
K^N=3\sigma^2+2(a_{3}^{3})^2-1,\\
-e_2(\sigma)=2a_{1}^{3}a_{3}^{3},\\
e_3(\sigma)=-2a_{1}^{3}a_{2}^{3}=0,\\
\sigma^{2}=2(a_{1}^{3})^2-1.
\end{cases}
\end{equation}
where $K^N=e_1(f_2)-e_2(f_1)-f_{1}^{2}-f_{2}^{2}+2f_{3}\sigma$.\\
Comparing the 2nd equation with the 7th equation of  (\ref{RCH}), we have $(a_1^3)^2= (a_2^3)^2$. However, using the 3rd equation  of  (\ref{RCH}), we get $a_1^3 a_2^3=0$ and  hence $a_1^3= a_2^3=0$. We substitute this into the 2nd equation  of  (\ref{RCH}) to have $\sigma^2=-1$, a contradiction.\\
 From which we obtain the theorem.

\end{proof}

\section{ Biharmonic Riemannian submersions from Sol space}

We state  the following proposition (\cite{WO}) which will be later used in the rest of the paper.
\begin{proposition}(see \cite{WO})\label{Pro1}
Let $\pi:(M^3,g)\to (N^2,h)$ be a Riemannaian submersion
with the adapted frame $\{e_1,\; e_2, \;e_3\}$ and the integrability
data $ f_1, f_2, \kappa_1,\;\kappa_2\;{\rm and}\; \sigma$. Then, the
Riemannaian submersion $\pi$ is biharmonic if and only if
\begin{equation}\label{pro1}
\begin{cases}
-\Delta^{M}\kappa_1-2\sum\limits_{i=1}^{2}f_i e_i(\kappa_2)-\kappa_2\sum\limits_{i=1}^{2}\left(e_i( f_i)
-\kappa_i f_i\right)+\kappa_1\left(-K^{N}+\sum\limits_{i=1}^{2}f_{i}^{2}\right)
=0,\\
-\Delta^{M}\kappa_2+2\sum\limits_{i=1}^{2}f_i e_i(\kappa_1)+\kappa_1\sum\limits_{i=1}^{2}(e_i( f_i)
-\kappa_i f_i)+\kappa_2\left(-K^{N}+\sum\limits_{i=1}^{2}f_{i}^{2}\right)=0
\end{cases}
\end{equation}
where $K^{N}=R^{N}_{1212}\circ\pi=e_1(f_2)-e_2(f_1)-f_{1}^{2}-f_{2}^{2}
$ is  Gauss curvature of Riemannian manifold $(N^2,h)$.
\end{proposition}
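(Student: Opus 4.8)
The plan is to evaluate the bitension field $\tau^2(\pi)$ from (\ref{BT1}) explicitly as a section of $\pi^{-1}TN$, to write it in the orthonormal frame $\{\varepsilon_1,\varepsilon_2\}$ with $\varepsilon_i={\rm d}\pi(e_i)$, $i=1,2$, and then to characterize biharmonicity as the vanishing of its two components. The first step is the tension field. Since $\{e_1,e_2\}$ is an adapted (basic) frame, $f_3=0$, and the standard relation between the Levi-Civita connections of $M$ and $N$ for basic fields shows that the second fundamental form $\nabla{\rm d}\pi$ vanishes on horizontal basic pairs, so $(\nabla{\rm d}\pi)(e_1,e_1)=(\nabla{\rm d}\pi)(e_2,e_2)=0$. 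The only contribution comes from the vertical direction, where (\ref{R2}) gives $\nabla^M_{e_3}e_3=\kappa_1e_1+\kappa_2e_2$; hence $\tau(\pi)=(\nabla{\rm d}\pi)(e_3,e_3)=-{\rm d}\pi(\nabla^M_{e_3}e_3)=-(\kappa_1\varepsilon_1+\kappa_2\varepsilon_2)$, which is consistent with the harmonicity criterion in Proposition \ref{PH}.

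Next I would record the pullback covariant derivatives $\nabla^\pi_{e_i}\varepsilon_j$, which are the essential ingredients. For $i,j\in\{1,2\}$ the vanishing of $\nabla{\rm d}\pi$ on horizontal pairs reduces these to the induced connection of $N$, i.e. to the horizontal projection of (\ref{R2}): $\nabla^\pi_{e_1}\varepsilon_1=-f_1\varepsilon_2$, $\nabla^\pi_{e_1}\varepsilon_2=f_1\varepsilon_1$, $\nabla^\pi_{e_2}\varepsilon_1=-f_2\varepsilon_2$, $\nabla^\pi_{e_2}\varepsilon_2=f_2\varepsilon_1$. Along the fibre the mixed second fundamental form cancels the connection term: using the symmetry of $\nabla{\rm d}\pi$ together with $\nabla^M_{e_1}e_3=\sigma e_2$, $\nabla^M_{e_2}e_3=-\sigma e_1$ and $\nabla^M_{e_3}e_1=-\kappa_1e_3+\sigma e_2$, $\nabla^M_{e_3}e_2=-\sigma e_1-\kappa_2e_3$ from (\ref{R2}), one checks $\nabla^\pi_{e_3}\varepsilon_1=\nabla^\pi_{e_3}\varepsilon_2=0$.

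With these identities I would expand the rough Laplacian ${\rm Trace}_g(\nabla^\pi\nabla^\pi-\nabla^\pi_{\nabla^M})\tau(\pi)$ by the Leibniz rule applied to $\tau(\pi)=-(\kappa_1\varepsilon_1+\kappa_2\varepsilon_2)$: the second derivatives of the scalar coefficients give the terms $\Delta^M\kappa_1,\Delta^M\kappa_2$; the cross terms $-2\sum_i e_i(\kappa_a)\nabla^\pi_{e_i}\varepsilon_a$ give the first-order contributions $2\sum_i f_ie_i(\kappa_a)$; and the frame Laplacians $\sum_i(\nabla^\pi\nabla^\pi-\nabla^\pi_{\nabla^M})\varepsilon_a$ produce the zeroth-order contributions $\kappa_a\sum_i f_i^2$ and $\kappa_a\sum_i(e_i(f_i)-\kappa_if_i)$, where $a=1,2$. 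For the curvature term I would use that $N^2$ is a surface, so $R^N(X,Y)Z=K^N(\langle Y,Z\rangle X-\langle X,Z\rangle Y)$; since ${\rm d}\pi(e_3)=0$ only $i=1,2$ survive and a short computation gives ${\rm Trace}_g R^N({\rm d}\pi,\tau(\pi)){\rm d}\pi=K^N(\kappa_1\varepsilon_1+\kappa_2\varepsilon_2)$, contributing the terms $-\kappa_aK^N$. Reading off the $\varepsilon_1$- and $\varepsilon_2$-components of $\tau^2(\pi)$ and setting each equal to zero then yields precisely the two equations of (\ref{pro1}).

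The main obstacle is the careful sign- and term-bookkeeping in the rough Laplacian. The data $\kappa_1,\kappa_2,\sigma,f_1,f_2$ are functions on $M$ rather than basic functions, so vertical derivatives along $e_3$ genuinely occur and must be controlled using the Jacobi relations (\ref{Jac}) and the cancellation $\nabla^\pi_{e_3}\varepsilon_j=0$ established above; one must also fix the sign convention for $\Delta^M$ so that the Laplacian terms enter (\ref{pro1}) with the stated sign, and recognize that the mixed first-order contributions organize into the symmetric combinations $\sum_i(e_i(f_i)-\kappa_if_i)$. Once the pullback-connection identities are in place, the remaining computation is direct though lengthy.
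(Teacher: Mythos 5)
Your proposal is correct: the tension field is indeed $\tau(\pi)=-(\kappa_1\varepsilon_1+\kappa_2\varepsilon_2)$, the pullback-connection identities $\nabla^\pi_{e_i}\varepsilon_a$ (including $\nabla^\pi_{e_3}\varepsilon_a=0$) are as you state, and expanding the bitension field (\ref{BT1}) in the frame $\{\varepsilon_1,\varepsilon_2\}$ with the surface curvature identity $R^N(X,Y)Z=K^N(\langle Y,Z\rangle X-\langle X,Z\rangle Y)$ reproduces exactly the two components of (\ref{pro1}). Note that the paper itself gives no proof of this proposition --- it is imported from the cited reference \cite{WO} --- and your derivation is precisely the standard computation carried out there, so there is nothing to contrast. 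One small remark: the Jacobi relations (\ref{Jac}) you flag as needed for bookkeeping are in fact not required, since the vertical second derivatives $e_3e_3(\kappa_a)$ and the term $\nabla^\pi_{\nabla^M_{e_3}e_3}\tau(\pi)$ are absorbed verbatim into $\Delta^M\kappa_a$ and the $\kappa_if_i$ terms, respectively.
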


The following proposition was found in \cite{WO1}.
 \begin{proposition}(see \cite{WO1})\label{L1}
Let $\pi:(M^3,g)\longrightarrow (N^2,h)$ be a Riemannian submersion
from 3-manifolds  with  an orthonormal frame $\{e_1, e_2, e_3\}$ and
$ e_3$ being vertical. If $\nabla_{e_{1}}e_{1}=0$, then either $\nabla_{e_{2}}e_{2}=0$, \;or\; $\nabla_{e_{2}}e_{2}\not\equiv0$, and
the  frame $\{e_1, e_2, e_3\}$  is an adapted frame.
\end{proposition}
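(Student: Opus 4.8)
The plan is to translate everything into the integrability data via (\ref{R1})--(\ref{R2}) and to recognise that the whole statement is really a claim about the single datum $f_3$. Reading off the formulas $\nabla_{e_1}e_1=-f_1e_2$ and $\nabla_{e_2}e_2=f_2e_1$ in (\ref{R2}), the hypothesis $\nabla_{e_1}e_1=0$ is exactly $f_1\equiv 0$, while $\nabla_{e_2}e_2\equiv 0$ and $\nabla_{e_2}e_2\not\equiv 0$ are exactly $f_2\equiv 0$ and $f_2\not\equiv 0$. In the first alternative there is nothing further to prove, so the entire force of the proposition lies in showing that $f_1\equiv 0$ together with $f_2\not\equiv 0$ makes the frame adapted, i.e. (by the definition recorded after (\ref{R1})) that $f_3=0$.

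First I would pin down the frame. On the open set where $f_2\neq 0$ the identity $\nabla_{e_2}e_2=f_2e_1$ exhibits $e_1$ as $\pm\nabla_{e_2}e_2/|\nabla_{e_2}e_2|$, so $e_1$, and hence the orthonormal pair $\{e_1,e_2\}$ in the horizontal plane, is determined (up to sign) by the geometry of $\pi$ rather than freely chosen; this is what removes the rotational freedom responsible for a general frame failing to be basic. To isolate what remains, I would use (\ref{R1}): since $[e_1,e_3]=f_3e_2+\kappa_1e_3$ and $[e_2,e_3]=-f_3e_1+\kappa_2e_3$, the fields $e_1,e_2$ are basic precisely when the horizontal parts of these brackets vanish, that is precisely when $f_3=0$; equivalently, $f_3$ measures the rate at which $\{e_1,e_2\}$ rotates as one moves along a fibre. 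Substituting $f_1\equiv 0$ into the Jacobi relations (\ref{Jac}) yields the two auxiliary equations $e_1(f_3)=(\kappa_1+f_2)f_3$ and $e_2(f_3)=e_3(f_2)+\kappa_2f_3$, which I would then try to combine with the canonical description of $e_1$ above to show that $\{e_1,e_2\}$ cannot rotate along the fibres, forcing $f_3=0$.

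The main obstacle is exactly this last implication. Canonical pointwise determination of $e_1$ does not by itself prevent fibrewise rotation, because the quantity $\nabla_{e_2}e_2$ that singles out $e_1$ is built from $e_2$, which in turn depends on $e_1$; differentiating the relation $f_2e_1=\nabla_{e_2}e_2$ along $e_3$ therefore produces a term $\nabla_{e_3}\nabla_{e_2}e_2$ that feeds the curvature of $(M,g)$ back into the computation. The crux is to control this term---using that the horizontal distribution and the metric of a Riemannian submersion are invariant along the fibres---so as to deduce $e_3$-invariance of the canonical direction and hence $f_3=0$. Once this is established, the dichotomy $f_2\equiv 0$ versus $f_2\not\equiv 0$ exhausts the cases and closes the argument.
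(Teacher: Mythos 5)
Your reduction of the statement is correct as far as it goes, but note first that the paper itself contains no proof of Proposition \ref{L1}: it is quoted from the preprint \cite{WO1}, so there is no internal argument to measure you against, and your proposal has to stand on its own. It does not. Everything you actually carry out is bookkeeping: reading $f_1,f_2$ off (\ref{R2}), identifying ``adapted'' with $f_3=0$ via (\ref{R1}), and specializing the Jacobi identities (\ref{Jac}) to $f_1\equiv 0$ (your two specialized identities are correct). The entire content of the proposition is the implication ($f_1\equiv 0$ and $f_2\not\equiv 0$) $\Rightarrow$ $f_3=0$, and this is precisely the step you label ``the main obstacle'' and leave as a plan---``control'' the term $\nabla_{e_3}\nabla_{e_2}e_2$ using fibre-invariance of the horizontal distribution. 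That step is never performed, so what you have is a restatement of the problem, not a proof.

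Worse, no argument from the quoted hypotheses can close this gap, because the implication is false at this level of generality. Let $U=\{(x,y,z)\in\r^3:\ x>z,\ y>0\}$ with the flat metric, let $\pi:U\to\pi(U)\subset\r^2$, $\pi(x,y,z)=(x,y)$, put $\theta=\arctan\bigl(y/(x-z)\bigr)$, $\rho^2=(x-z)^2+y^2$, and
\[
e_1=\cos\theta\,\partial_x+\sin\theta\,\partial_y,\qquad e_2=-\sin\theta\,\partial_x+\cos\theta\,\partial_y,\qquad e_3=\partial_z .
\]
This is an orthonormal frame with $e_3$ vertical, and since the connection is flat, $\nabla_{e_1}e_1=e_1(\theta)\,e_2$, $\nabla_{e_2}e_2=-e_2(\theta)\,e_1$, $[e_1,e_3]=-e_3(\theta)\,e_2$. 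A direct computation gives $e_1(\theta)=0$, $e_2(\theta)=1/\rho$, $e_3(\theta)=y/\rho^2$; geometrically, $e_1$ is the unit radial field issuing from the moving centre $(z,0)$ in each plane $z=\mathrm{const}$, so its integral curves are geodesic rays. Hence $\nabla_{e_1}e_1\equiv 0$ and $\nabla_{e_2}e_2=-(1/\rho)e_1$ is nowhere zero, yet $f_3=-e_3(\theta)=-y/\rho^2$ is nowhere zero: the frame is nowhere adapted, and one checks that all of (\ref{R1}), (\ref{R2}), (\ref{Jac}) hold with $f_1=\kappa_1=\kappa_2=\sigma=0$, $f_2=-1/\rho$, $f_3=-y/\rho^2$. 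So the ``canonical determination'' of $e_1$ by $\nabla_{e_2}e_2$ carries no information (it is circular, as you yourself suspected), and $e_3$-invariance of that direction simply does not follow. Whatever proof \cite{WO1} contains must rest on hypotheses beyond those quoted here; in the present paper the proposition is only ever invoked for the special frame $e_1=e_3\times E_1$ in Sol space, where the additional identities (\ref{thb2}) and (\ref{RC0}) are available, and any correct argument has to exploit that extra structure rather than proceed in the generality you attempt.
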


We will prove the important conclusion used proving our main theorem
\begin{theorem}\label{Cla}
Let $\pi:(\r^3,g_{Sol}=e^{2z}{\rm d}x^{2}+e^{-2z}{\rm
d}y^{2}+{\rm d}z^{2})\to (N^2,h)$ be a Riemannian submersion. Then, we have such an adapted  frame $\{e_1=a_1^2E_2+a_1^3E_3,\; e_2,
\;e_3\}$ of the Riemannian submersion $\pi$ with $e_3$ being vertical. Moreover, if $E_1$ is not vertical, then $\nabla_{e_2}e_2\neq0$, i.e.,\;$f_2\neq0$.
\end{theorem}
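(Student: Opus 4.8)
The plan is to reduce everything to the two scalar invariants $f_1,f_2$ of the horizontal frame. First I would take the unit vertical field and expand $e_3=a_3^1E_1+a_3^2E_2+a_3^3E_3$, so that the horizontal plane is $e_3^{\perp}$. Since $\mathrm{span}\{E_2,E_3\}$ and $e_3^{\perp}$ are two planes in a $3$-dimensional space, they intersect in a line; taking a unit vector $e_1$ along it gives $e_1=a_1^2E_2+a_1^3E_3$, and completing $\{e_1,e_3\}$ to a positively oriented orthonormal frame produces $e_2$. Because $e_1$ has no $E_1$-component, the dual relation reads $E_1=a_2^1e_2+a_3^1e_3$, so that \emph{$E_1$ is not vertical} is equivalent to $a_2^1\neq0$, i.e. to $e_2$ having a nonzero $E_1$-component.

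The crucial structural input is that, by (\ref{So2}), the distribution $\mathrm{span}\{E_2,E_3\}$ is autoparallel (its leaves $\{x=\mathrm{const}\}$ are totally geodesic hyperbolic planes), so $\nabla_{e_1}e_1$ lies in $\mathrm{span}\{E_2,E_3\}$ and in particular has no $E_1$-component. On the other hand (\ref{R2}) gives $\nabla_{e_1}e_1=-f_1e_2$. Hence, when $E_1$ is not vertical, comparing $E_1$-components yields $-f_1a_2^1=0$ with $a_2^1\neq0$, forcing $f_1=0$, i.e. $\nabla_{e_1}e_1=0$. With this, Proposition \ref{L1} applies and gives simultaneously that $\{e_1,e_2,e_3\}$ is adapted and that either $\nabla_{e_2}e_2=0$ or $\nabla_{e_2}e_2\neq0$; this already establishes the first assertion. (When $E_1$ is vertical, $e_3=\pm E_1$ and the horizontal plane equals $\mathrm{span}\{E_2,E_3\}$, and adaptedness is checked directly on an explicit frame such as $\{E_2,E_3,E_1\}$.)

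It remains to exclude $f_2=0$ (i.e. $\nabla_{e_2}e_2=0$) in the non-vertical case, and this I expect to be the main obstacle. Assuming $f_1=f_2=0$ on an open set, I would parametrize the frame by two functions — $e_1=\cos\alpha\,E_2+\sin\alpha\,E_3$ and a rotation angle $\beta$ mixing $E_1$ with the complementary horizontal direction, so that $\cos\beta=a_2^1\neq0$ encodes non-verticality — and compute, via (\ref{So2}), closed forms for the integrability data (for instance $\sigma=-e_1(\beta)$, while $f_1=0$ becomes the geodesic equation $e_1(\alpha)=-\cos\alpha$, plus explicit expressions for $\kappa_1,\kappa_2$). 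Substituting into (\ref{RC0}), the Jacobi identities (\ref{Jac}) and the relation (\ref{GCB0}) yields an overdetermined system. The difficulty is that the diagonal relations of (\ref{RC0}) (its second, fourth and seventh equations) only reproduce $3\sigma^2=1-2(a_3^3)^2$ and the automatically satisfied scalar-curvature identity, so no contradiction appears at zeroth order; the plan is therefore to cross-differentiate $3\sigma^2=1-2(a_3^3)^2$ along $e_1$ using $e_1(\alpha)=-\cos\alpha$ and $\sigma=-e_1(\beta)$, feed in the off-diagonal first-order relations for $e_1(\sigma)$ and $e_2(\sigma)$, and compare the outcome with the formula for $\kappa_1$ to force an incompatibility with $\cos\beta\neq0$. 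Once $f_2\neq0$ is secured, Proposition \ref{L1} again certifies adaptedness of the frame, completing the proof.
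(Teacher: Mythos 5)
Your first half is sound and is essentially the paper's argument: you build $e_1$ as a unit vector in $e_3^{\perp}\cap\mathrm{span}\{E_2,E_3\}$, note that ``$E_1$ not vertical'' amounts to $a_2^1\neq 0$, and kill $f_1$ by comparing $E_1$-components in $\nabla_{e_1}e_1=-f_1e_2$; your observation that $\mathrm{span}\{E_2,E_3\}$ is autoparallel is a cleaner way of phrasing the paper's coefficient computation $\langle\nabla_{e_1}e_1,E_1\rangle=e_1(a_1^1)=0$. One misstatement along the way: Proposition \ref{L1} does \emph{not} give adaptedness ``simultaneously'' with the dichotomy; it gives adaptedness only in the branch $\nabla_{e_2}e_2\neq 0$, so the first assertion of the theorem is not established until the branch $\nabla_{e_2}e_2=0$ has been ruled out. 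You do acknowledge this ordering at the very end, but the claim that Proposition \ref{L1} ``already establishes the first assertion'' is not correct as written.

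The genuine gap is in the second half, which is the heart of the theorem and occupies most of the paper's proof: you never actually derive a contradiction from $f_1=f_2=0$, you only describe a plan (``cross-differentiate\dots, feed in\dots, compare with $\kappa_1$\dots to force an incompatibility''), and that plan starts from a false equation. In the branch $f_2=0$ the frame is precisely \emph{not} known to be adapted, so the generalized datum $f_3$ cannot be discarded: the fourth equation of (\ref{RC0}) there reads $2f_3\sigma-3\sigma^2=2(a_3^3)^2-1$ (this is the fourth equation of (\ref{RC1})), not $3\sigma^2=1-2(a_3^3)^2$ as you assert; likewise the second and seventh equations are genuine differential relations in $\kappa_1,\kappa_2$, not restatements of that identity. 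The paper's Case II carries $f_3$ through the entire computation: it applies $e_1$ to the twelfth equation of (\ref{thb2}), eliminates $e_1(f_3)$ via the first Jacobi identity in (\ref{Jac}), uses the orthogonality identities among the $a_i^j$, and only then do the $f_3$-terms cancel, yielding $\sigma^2=2(a_2^3)^2-1$ and, in the angle variables, $\cos^2\theta\,(2\cos^4\alpha+\sin^2\alpha)=1$, which is then shown incompatible with $e_1(\theta)=-\cos\theta$ and $e_1(\alpha)=-\sigma$. The degenerate sub-case $a_3^1=0$ (the paper's Case I) also needs its own short argument, which your sketch does not isolate. Until the $f_2=0$ branch is actually closed with $f_3$ retained, neither assertion of the theorem is proved.
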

\begin{proof}
Obviously, if $ E_1$  is tangent to the fiber of the  Riemannian
submersion $\pi$ , then any basic field is of the form $e=a^2E_2+b^2E_3,\;{\rm and}\;a^2+b^2=1$.\\
From this time on, we only need  to suppose  that $ E_1$  is not vertical, i.e., $ e_3\neq\pm E_1$. Then, the vector filed $e_1=e_3\times E_1$  is  horizontal  and
 hence $\langle e_1, E_1\rangle=0$. From this, we have a defined orthonormal frame $\{e_1, \;e_2=e_3\times e_1,\;e_3\}$ on $M^3$.
If  denoting by $e_i=\sum\limits_{j=1}^{3}a_i^jE_j, i=1,2,3$, together with $\langle e_1, E_1\rangle=0$, then  $e_1$  is expressed as the form $e_1=a_1^2E_2+a_1^3E_3$ and hence \;$(a_1^2)^2+(a_1^3)^2=1$. From these, we have the following
\begin{equation}\label{zb}
\begin{array}{lll}
a_1^1=0,\;a_3^{1}\neq\pm1\;{\rm and}\; a_2^{1}\neq0.
\end{array}
\end{equation}
One can further check the following equalities  as
\begin{equation}\label{bb4}
 f_1=0,\;\nabla_{e_1}e_1=0.
\end{equation}
By a direct computation, we get
\begin{equation}\label{bb2}
\begin{array}{lll}
\nabla_{e_{1}} e_{1}=\nabla_{e_{1}}(\sum\limits_{i=1}^{3}a_1^iE_i)
=\sum\limits_{i=1}^{3}e_1(a_1^i)E_i+\sum\limits_{i,j=1}^{3}a_1^ja_1^i\nabla_{E_j}E_i.
\end{array}
\end{equation}
However, using (\ref{R2}), the above has  another expression as
\begin{equation}\label{bb1}
\nabla_{e_{1}} e_{1}=-f_1e_{2}=-f_1\sum\limits_{i=1}^{3}a_2^iE_i.
\end{equation}

By equating  (\ref{bb2}) and (\ref{bb1}) and  comparing the coefficient of $E_1$, we obatin
 \begin{equation}\label{bb3}
\begin{array}{lll}
-f_1 a_2^1=\langle-f_1\sum\limits_{i=1}^{3}a_1^iE_i,E_1\rangle=\langle\nabla_{e_{1}} e_{1},E_1\rangle\\
=\langle\sum\limits_{i=1}^{3}e_1(a_1^i)E_i+\sum\limits_{i,j=1}^{3}a_1^ja_1^i\nabla_{E_j}E_i,E_1\rangle=e_1(a_1^1)=0,
\end{array}
\end{equation}
which has been used (\ref{So2}) and $a_1^1=0$. This leads to  $f_1=0$ for $a_2^1\neq0$, and hence (\ref{bb4}) holds.\\
Applying (\ref{So2}), (\ref{R2}) and $a_1^1=f_1=0$ and  a further computation  similar to those used calculating  (\ref{bb2})--(\ref{bb3}) gives
\begin{equation}\label{thb2}
\begin{cases}
e_1(a_{1}^{2})=a_1^2a_{1}^{3},\\
e_1(a_{1}^{3})=-(a_{1}^{2})^2,\\

e_1(a_{2}^{1})=-\sigma a_3^1,\\
e_1(a_{2}^{2})=a_1^2a_2^3-\sigma a_{3}^{2},\\
e_1(a_{2}^{3})=-a_1^2a_2^2-\sigma a_{3}^{3},\\
e_1(a_{3}^{1})=\sigma a_2^1,\\
e_1(a_{3}^{2})=a_1^2a_3^3+\sigma a_{2}^{2},\\
e_1(a_{3}^{3})=-a_1^2a_3^2+\sigma a_{2}^{3},\\
f_2a_2^1=-a_1^3a_2^1+\sigma a_3^1=-a_3^2+\sigma a_3^1,\\
e_2(a_{3}^{1})=-a_2^1a_3^3,\\
e_2(a_{2}^{1})=-a_2^1a_2^3,\\
\kappa_1a_3^1=(\sigma -f_{3}) a_2^1-a_1^3a_3^1=(\sigma -f_{3}) a_2^1+a_2^2.

\end{cases}
\end{equation}

Since $\nabla_{e_{1}}e_{1}=0$, we conclude from Proposition \ref{L1} to have either $\nabla_{e_{2}}e_{2}\neq0$,  and the
frame $\{e_1, e_2, e_3\}$  is adapted  to the Riemannian submersion $\pi$; or \;$\nabla_{e_{2}}e_{2}=0$. Now, we just need to consider the latter case
$\nabla_{e_{2}}e_{2}=0$,  i.e., $f_2=0$.  From these, one has the following
\begin{equation}\label{zb1}
a_1^1=f_1=f_2=0.
\end{equation}
Then,  (\ref{RC0}) turns into
\begin{equation}\label{RC1}
\begin{cases}
-e_1(\sigma)+2\kappa_1\sigma=-2a_{2}^{3}a_{3}^{3},\\
e_1(\kappa_1)+\sigma^2-\kappa_{1}^2=2(a_{2}^{3})^2-1,\;\\
e_1(\kappa_2)-e_3(\sigma)-\kappa_1\kappa_2=-2a_{1}^{3}a_{2}^{3},\;\\
2f_{3}\sigma-3\sigma^2=2(a_{3}^{3})^2-1,\\
-e_2(\sigma)+2\kappa_2\sigma=2a_{1}^{3}a_{3}^{3},\\
e_2(\kappa_{1})+e_3(\sigma)-\kappa_1 \kappa_2=-2a_{1}^{3}a_{2}^{3},\\
\sigma^{2}+e_2(\kappa_2)-\kappa_2^2=(2a_{1}^{3})^2-1.
\end{cases}
\end{equation}

We now show that the latter case (i.e., $a_1^1=f_1=f_2=0$,  $a_3^1\neq\pm1$ and $a_2^1\neq0$) can not happen by considering the following two cases:\\

Case I: $a_3^1=0,\;f_2=0$. In this case, since $a_1^1=0$, we have $a_2^1=\pm1$ and hence $a_2^3=0$.  By the 9th equation  of (\ref{thb2}),
one easily sees that $a_3^2= a_3^1=0$ and hence $a_3^3=\pm1$. This leads to $a_1^3=0$ and $a_1^2=\pm1$. Substituting this into the 6th equation  of  (\ref{thb2}), we have
 $\sigma=0$. However, we substitute $\sigma=0$ and  $a_3^3=\pm1$ into the 4th equation  of  (\ref{RC1}) to find  $0=1$, a contradiction.\\

Case II: $a_3^1\neq0,\;\pm1$ and $f_2=0$. In this case, since $a_1^1=0$, we then have $a_2^1\neq0,\pm1$. Substitute $f_2=0$ into the 9th equation of (\ref{thb2}) to have
\begin{equation}\label{s1}
\begin{array}{lll}
a_3^2=\sigma a_3^1.
\end{array}
\end{equation}
 Applying $e_1$ to both sides the 12th equation of  (\ref{thb2}), we get
\begin{equation}\label{s2}
\begin{array}{lll}
e_1(\kappa_1) a_3^1+\kappa_1e_1(a_3^1)=e_1(\sigma)a_2^1+\sigma e_1(a_2^1)-e_1(f_3)a_2^1-f_3 e_1(a_2^1)+e_1( a_2^2),
\end{array}
\end{equation}
which can be rewritten as
\begin{equation}\label{s3}
\begin{array}{lll}
e_1(\kappa_1) a_3^1+\kappa_1e_1(a_3^1)-e_1(\sigma)a_2^1-\sigma e_1(a_2^1)+e_1(f_3)a_2^1+f_3 e_1(a_2^1)-e_1( a_2^2)=0.
\end{array}
\end{equation}
Using the 3rd, the 4th, the 6th equation of (\ref{thb2}), the 1st, the 2nd equation of (\ref{RC1}) and  the 1st equation of (\ref{Jac}), a straightforward computation gives
\begin{equation}\label{s4}
\begin{array}{lll}
0=(\kappa_1^2-\sigma^2+2(a_2^3)^2-1) a_3^1+\kappa_1\sigma a_2^1\\
-(2\kappa_1\sigma+2a_2^3a_3^3)a_2^1+\sigma^2 a_3^1+\kappa_1f_3a_2^1-f_3 \sigma a_3^1-a_1^2a_2^3+\sigma a_3^2\\
=\kappa_1^2a_3^1-\sigma^2a_3^1+2a_3^1(a_2^3)^2-a_3^1+\kappa_1\sigma a_2^1\\
-2\kappa_1\sigma a_2^1-2a_2^3a_3^3a_2^1+\sigma^2 a_3^1+\kappa_1f_3a_2^1-f_3 \sigma a_3^1-a_1^2a_2^3+\sigma a_3^2.
\end{array}
\end{equation}
One substitutes the 12th equation of (\ref{thb2}) and (\ref{s1}) into (\ref{s3}), together with $a_2^1a_2^3+a_3^1a_3^3=0$ and $a_2^2=a_1^3a_3^1$, to compute the following
\begin{equation}\label{s05}
\begin{array}{lll}
0
=\kappa_1(\sigma a_2^1-f_3a_2^1+a_2^2)-\sigma^2a_3^1+2a_3^1(a_2^3)^2-a_3^1+\kappa_1\sigma a_2^1\\
-2\kappa_1\sigma a_2^1+2a_3^1(a_3^3)^2+\sigma^2 a_3^1+\kappa_1f_3a_2^1-f_3 \sigma a_3^1-a_1^2a_2^3+\sigma^2 a_3^1\\
=\kappa_1a_2^2+2a_3^1[(a_2^3)^2+(a_3^3)^2]-a_3^1-f_3 \sigma a_3^1-a_1^2a_2^3+\sigma^2 a_3^1\\
=\kappa_1a_3^1a_1^3+2a_3^1[1-(a_1^3)^2]-a_3^1-f_3 \sigma a_3^1-a_1^2a_2^3+\sigma^2 a_3^1\\
=(\sigma a_2^1-f_3a_2^1+a_2^2)a_1^3+2a_3^1-2a_3^1(a_1^3)^2-a_3^1-f_3 \sigma a_3^1-a_1^2a_2^3+\sigma^2 a_3^1\\
=\sigma a_2^1a_1^3-f_3a_2^1a_1^3+a_2^2a_1^3-2a_3^1(a_1^3)^2+a_3^1-f_3 \sigma a_3^1-a_1^2a_2^3+\sigma^2 a_3^1\\
=2\sigma^2 a_3^1-2f_3\sigma a_3^1-2a_3^1(a_1^3)^2,
\end{array}
\end{equation}
the last equality holds by using the fact $a_2^1a_1^3=a_3^2$, $a_3^2=\sigma a_3^1$  and  $a_3^1=a_1^2a_2^3-a_1^3a_2^2$.\\
Since $a_3^1\neq0$, then  (\ref{s05}) becomes
\begin{equation}\label{s5}
\begin{array}{lll}
2\sigma^2-2f_3\sigma -2(a_1^3)^2=0
\end{array}
\end{equation}
Substituting the 4th equation of (\ref{RC1}) into  (\ref{s5}), together with $(a_1^3)^2+(a_2^3)^2+(a_3^3)^2=1$, and  simplifying the resulting equation, we get
\begin{equation}\label{s6}
\begin{array}{lll}
\sigma^2=2(a_2^3)^2-1.
\end{array}
\end{equation}
This implies
\begin{equation}\label{s7}
\begin{array}{lll}
\sigma^2(a_3^1)^2=2(a_2^3a_3^1)^2-(a_3^1)^2.
\end{array}
\end{equation}
Since $\sigma a_3^1=a_3^2$, the above equation  is equivalent to
\begin{equation}\label{s8}
\begin{array}{lll}
2(a_2^3a_3^1)^2=(a_3^1)^2+(a_3^2)^2=1-(a_3^3)^2,
\end{array}
\end{equation}
or,
\begin{equation}\label{s9}
\begin{array}{lll}
2(a_2^3a_3^1)^2+(a_3^3)^2-1=0.
\end{array}
\end{equation}
On the other hand,  let $\theta$, $\alpha$ denote angles between $e_1$ and $E_2$,  between $e_3$ and $E_1$, respectively,  since $a_1^1=0$,  we have
 \begin{equation}\label{CR1}
\begin{cases}
 e_1=\cos\theta E_2+\sin\theta E_3, \\
 e_2=\sin\alpha E_1-\sin\theta\cos\alpha E_2+\cos\theta\cos\alpha E_3, \\
 e_3=\cos\alpha E_1+\sin\theta\sin\alpha E_2-\cos\theta\sin\alpha E_3,
\end{cases}
\end{equation}
where $a_1^2=\cos\theta,\;a_1^3=\sin\theta,\;a_2^1=\sin\alpha,\;a_2^2=-\sin\theta\cos\alpha,\;a_2^3=\cos\theta\cos\alpha,\;a_3^1=\cos\alpha,\;a_3^2=\sin\theta\sin\alpha $\;and $a_3^3=-\cos\theta\sin\alpha$.\\
Using the 1st  and the 3rd equation of  (\ref{thb2}), it is not difficult to check the following
\begin{equation}\label{s10}
\begin{array}{lll}
e_1(\alpha)=-\sigma,\;e_1(\theta)=-\cos\theta.
\end{array}
\end{equation}
Since $a_2^3=\cos\theta\cos\alpha,\;a_3^1=\cos\alpha$ and $\;a_3^3=-\cos\theta\sin\alpha $, then (\ref{s9}) turns into
\begin{equation}\label{s11}
\begin{array}{lll}
\cos^2\theta(2\cos^4\alpha +\sin^2\alpha) -1=0.
\end{array}
\end{equation}
Note that $\theta$ is angle between $e_1$ and $E_2$,  but  $\alpha$  angle between $e_3$ and $E_1$,  then the two functions: $ \cos^2\theta$,\;\;$2\cos^4\alpha +\sin^2\alpha$\; are linearly independent. Then, Eq. (\ref{s11}) implies taht $\theta$ and $\alpha$ have to be constants, and hence $ \cos\theta=0$  since  (\ref{s10}). Substituting this into  Eq. (\ref{s11}). we have $-1=0$, a contradiction.\\
Summarizing all results in the above cases, the theorem follows.

\end{proof}

\begin{remark}\label{re1}
Let $\pi:(\r^3,g_{Sol}=e^{2z}{\rm d}x^{2}+e^{-2z}{\rm
d}y^{2}+{\rm d}z^{2})\to (N^2,h)$ be a Riemannian submersion
 with $e_3$ being vertical. If $e_3\neq \pm E_1$, i.e.,\;$a_3^1\neq\pm1$, by Theorem \ref{Cla},
one can choose such an adapted frame $\{e_1=a_1^2E_2+a_1^3E_3,\; e_2,
\;e_3\}$ to $\pi$ and $f_2\neq0$. From these, the case corresponding to  $a_1^1=f_1=f_3=0$,\;$f_2\neq0$, $a_3^1\neq\pm1$ and $a_2^1\neq0$. Clearly, this implies that
the case $a_3^1\neq\pm1$, and $a_1^1=f_1=f_2=0$ can not happen.
\end{remark}
\begin{theorem}\label{cl}
A Riemannian submersion $\pi:(\r^3,g_{Sol}=e^{2z}{\rm d}x^{2}+e^{-2z}{\rm
d}y^{2}+{\rm d}z^{2})\to (N^2,h)$  from  Sol space exists only in  $(\r^3,g_{Sol})\to H^2$ with Gauss curvature of the base space $K^N=-1$.
\end{theorem}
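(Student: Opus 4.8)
The plan is to reduce everything to two scalar conditions and then read off the curvature. Combining the fourth line of (\ref{RC0}) with the Gauss curvature formula (\ref{GCB}) (this is just O'Neill's identity $K^N=K^M(e_1,e_2)+3\sigma^2$ for the horizontal plane, whose Sol sectional curvature is $2(a_3^3)^2-1$) gives, for any orthonormal frame with $e_3$ vertical,
\[
K^N=3\sigma^2+2(a_3^3)^2-1.
\]
Since both squares are nonnegative, $K^N=-1$ holds if and only if $\sigma=0$ and $a_3^3=0$, so it suffices to prove these two identities. I would split according to the value of $a_3^1=\langle e_3,E_1\rangle$. If $E_1$ is vertical, i.e. $a_3^1=\pm1$ and $e_3=\pm E_1$, then $a_3^3=\langle e_3,E_3\rangle=0$ at once, and the horizontal distribution equals $\mathrm{span}\{E_2,E_3\}$. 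By (\ref{So1}), $[E_2,E_3]=-E_2$ together with $[E_2,E_2]=[E_3,E_3]=0$ shows this distribution is involutive, so the vertical part of $[e_1,e_2]$ vanishes and $\sigma=0$; the displayed identity then yields $K^N=-1$.

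When $E_1$ is not vertical I would invoke Theorem \ref{Cla} and Remark \ref{re1} to work in the adapted frame with $a_1^1=f_1=f_3=0$, $f_2\neq0$, and the parametrization (\ref{CR1}) by the angle $\theta$ between $e_1$ and $E_2$ and the angle $\alpha$ between $e_3$ and $E_1$; here $a_2^1=\sin\alpha\neq0$, so $a_3^3=-\cos\theta\sin\alpha$ and the goal $a_3^3=0$ is equivalent to $\cos\theta=0$. The mechanism is the triple of directional derivatives of $\theta$ and $\alpha$ read off from (\ref{R2}) as in (\ref{s10}); the three I would single out are $e_1(\theta)=-\cos\theta$ (equivalently $(\nabla_{e_1}e_1)^v=0$), $e_1(\alpha)=-\sigma$, and $e_2(\alpha)=-\cos\theta\sin\alpha$ (equivalently $\langle\nabla_{e_2}e_2,e_3\rangle=0$). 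If $a_3^1=\cos\alpha=0$ on an open set, then $\alpha\equiv\pm\tfrac{\pi}{2}$ there, whence $\sigma=-e_1(\alpha)=0$ and $0=e_2(\alpha)=-\cos\theta\sin\alpha$ forces $\cos\theta=0$ because $\sin\alpha\neq0$; hence $K^N=-1$ on this region.

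The remaining possibility $a_3^1\neq0,\pm1$ is the crux, and I expect it to be the main obstacle: one must prove that no such submersion exists, in parallel with Case II of the proof of Theorem \ref{Cla}. The delicate point is that the curvature equations (\ref{RC0}) and the Jacobi identities (\ref{Jac}) are Gauss--Codazzi relations for the fixed Sol curvature, hence are automatically satisfied by every adapted frame, and the first-order integrability of the prescribed gradients of $\theta$ and $\alpha$ likewise collapses to identities, so no constraint survives at first order. I would therefore push to the next level: using the full derivatives of $\theta,\alpha$ together with the ninth and twelfth equations of (\ref{thb2}) (which here read $f_2\sin\alpha=-\sin\theta\sin\alpha+\sigma\cos\alpha$ and $\kappa_1\cos\alpha=\sigma\sin\alpha-\sin\theta\cos\alpha$), I would compute $e_3(\sigma)$ from the mixed-derivative identity $[e_2,e_3]\theta=\kappa_2\,e_3(\theta)$ and equate it with the value of $e_3(\sigma)$ forced by the third and sixth lines of (\ref{RC0}). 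I anticipate this comparison to simplify to an algebraic relation of the same shape as (\ref{s11}), namely $\cos^2\theta\,(2\cos^4\alpha+\sin^2\alpha)-1=0$. Since $\cos^2\theta$ depends only on the $e_1$--$E_2$ angle and $2\cos^4\alpha+\sin^2\alpha$ only on the $e_3$--$E_1$ angle, their linear independence forces $\theta$ and $\alpha$ to be constant; then $e_1(\theta)=-\cos\theta$ gives $\cos\theta=0$, and substituting back returns $-1=0$. This contradiction shows the open set $\{a_3^1\neq0,\pm1\}$ is empty; as $a_3^1$ is continuous on the connected space $\r^3$ it is then constant, equal to $0$ or to $\pm1$, so one of the two cases already treated holds globally and $K^N\equiv-1$. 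Thus the base is a hyperbolic space form.
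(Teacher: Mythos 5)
Your overall framing is sound and in places cleaner than the paper's: the identity $K^N=3\sigma^2+2(a_3^3)^2-1$ (the 4th line of (\ref{RC0}) combined with (\ref{GCB})) does reduce the theorem to proving $\sigma=0$ and $a_3^3=0$; your involutivity argument for the case $e_3=\pm E_1$, your use of $e_1(\alpha)=-\sigma$ and $e_2(\alpha)=-\cos\theta\sin\alpha$ in the case $a_3^1=0$, and the final connectedness argument (which the paper leaves implicit) are all correct.

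However, the case $a_3^1\neq 0,\pm1$ --- which carries the entire analytic content of the theorem --- is not proved in your write-up; it is only announced. You say you ``anticipate'' that your mixed-derivative computation of $e_3(\sigma)$ will ``simplify to an algebraic relation of the same shape as (\ref{s11})'', and everything after that is conditional on this unverified identity. There is no reason to expect it: (\ref{s11}) was derived in the proof of Theorem \ref{Cla} under the hypothesis $f_2=0$, whereas in the present case Remark \ref{re1} forces $f_2\neq 0$, and the $f_2$-terms change the outcome materially. Indeed, the paper's computation here (applying $e_1$ to the 12th equation of (\ref{thb2}) and using (\ref{Jac}) and (\ref{RC2})) produces not a $\sigma$-free relation but the quadratic (\ref{ss8}), namely $\sigma^2\cos\alpha+\sigma\sin\theta\sin\alpha-2\cos\alpha\sin^2\theta=0$; one must then solve it as $\sigma=\sin\theta\,\varphi(\alpha)$, extract $\kappa_1=\sin\theta\,\psi(\alpha)$ from the 12th equation of (\ref{thb2}), and compare two evaluations of $e_1(\sigma)$ via the 1st equation of (\ref{RC2}) to arrive at the genuinely different relation (\ref{ss13}), whose case analysis (constant $\theta$ versus non-constant $\theta$) finally yields the contradiction. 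Until you actually carry out your proposed computation and exhibit the resulting identity --- or reproduce a chain like the paper's --- the nonexistence in the generic case, and with it the theorem, remains unestablished.
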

\begin{proof}
Let $\pi:(\r^3,g_{Sol}=e^{2z}{\rm d}x^{2}+e^{-2z}{\rm
d}y^{2}+{\rm d}z^{2})\to (N^2,h)$ be a Riemannian submersion with $e_3$ being vertical.  For the above notations and signs, we just need to
consider the two cases $a_3^1=0$ or $a_3^3=\pm1$. We use the proof by contradiction to obtain the theorem. We now assume $a_3^1\neq0$ and  $a_3^3\neq\pm1$.
It follows from Theorem \ref{Cla} and Remark \ref{re1} that there exists  such an adapted frame $\{e_1=a_1^2E_2+a_1^3E_3,\; e_2,
\;e_3\}$ to $\pi$, and hence the following hold
\begin{equation}\label{c1}
\begin{array}{lll}
a_1^1=f_1=f_3=0,\;f_2\neq0,\;a_2^1\neq0,\pm1\;{\rm and}\;a_3^1\neq0,\pm1.
\end{array}
\end{equation}
Then, (\ref{RC0}) becomes as
\begin{equation}\label{RC2}
\begin{cases}
-e_1(\sigma)+2\kappa_1\sigma=-2a_{2}^{3}a_{3}^{3},\\
e_1(\kappa_1)+\sigma^2-\kappa_{1}^2=2(a_{2}^{3})^2-1,\;\\
e_1(\kappa_2)-e_3(\sigma)-\kappa_1\kappa_2=-2a_{1}^{3}a_{2}^{3},\;\\
e_1(f_2)-f_{2}^{2}-3\sigma^2=2(a_{3}^{3})^2-1,\\
-e_2(\sigma)+2\kappa_2\sigma=2a_{1}^{3}a_{3}^{3},\\
e_2(\kappa_{1})+e_3(\sigma)+\kappa_2 f_2-\kappa_1 \kappa_2=-2a_{1}^{3}a_{2}^{3},\\
\sigma^{2}+e_2(\kappa_2)-\kappa_1f_2-\kappa_2^2=2(a_{1}^{3})^2-1.\\
\end{cases}
\end{equation}

We  apply $e_1$ to both sides the 12th equation of (\ref{thb2}), together with $f_3=0$, to get
\begin{equation}\label{ss3}
\begin{array}{lll}
e_1(\kappa_1) a_3^1+\kappa_1e_1(a_3^1)-e_1(\sigma)a_2^1-\sigma e_1(a_2^1)-e_1( a_2^2)=0.
\end{array}
\end{equation}
Using the 3rd, the 4th, the 6th equation of (\ref{thb2}), the 1st, the 2nd equation of (\ref{RC2}) and  the 1st equation of (\ref{Jac}), a straightforward computation gives
\begin{equation}\label{ss4}
\begin{array}{lll}
0=(\kappa_1^2-\sigma^2+2(a_2^3)^2-1) a_3^1+\kappa_1\sigma a_2^1
-(2\kappa_1\sigma+2a_2^3a_3^3)a_2^1+\sigma^2 a_3^1-a_1^2a_2^3+\sigma a_3^2\\
=\kappa_1^2a_3^1-\sigma^2a_3^1+2a_3^1(a_2^3)^2-a_3^1+\kappa_1\sigma a_2^1
-2\kappa_1\sigma a_2^1-2a_2^3a_3^3a_2^1+\sigma^2 a_3^1-a_1^2a_2^3+\sigma a_3^2.
\end{array}
\end{equation}
Substituting  the 12th equation of  (\ref{thb2}) into (\ref{ss3}), together with $a_2^1a_2^3+a_3^1a_3^3=0$, $f_3=0$  and $a_2^2=a_1^3a_3^1$, a direct computation gives
\begin{equation}\label{ss5}
\begin{array}{lll}
0
=\kappa_1(\sigma a_2^1+a_2^2)-\sigma^2a_3^1+2a_3^1(a_2^3)^2-a_3^1+\kappa_1\sigma a_2^1
-2\kappa_1\sigma a_2^1+2a_3^1(a_3^3)^2+\sigma^2 a_3^1-a_1^2a_2^3+\sigma^2 a_3^1\\
=\kappa_1a_2^2+2a_3^1[(a_2^3)^2+(a_3^3)^2]-a_3^1-a_1^2a_2^3+\sigma^2 a_3^1\\
=\kappa_1a_3^1a_1^3+2a_3^1[1-(a_1^3)^2]-a_3^1-a_1^2a_2^3+\sigma^2 a_3^1\\
=(\sigma a_2^1+a_2^2)a_1^3+2a_3^1-2a_3^1(a_1^3)^2-a_3^1-a_1^2a_2^3+\sigma^2 a_3^1\\
=\sigma a_2^1a_1^3+a_2^2a_1^3-2a_3^1(a_1^3)^2+a_3^1-a_1^2a_2^3+\sigma^2 a_3^1\\
=\sigma^2 a_3^1+\sigma a_3^2-2a_3^1(a_1^3)^2,
\end{array}
\end{equation}
the last equality holds for using the fact $a_2^1a_1^3=a_3^2$ and  $a_3^1=a_1^2a_2^3-a_1^3a_2^2$.\\

Since $a_1^1=0$, one can assume that
 \begin{equation}\label{CR1}
\begin{cases}
 e_1=\cos\theta E_2+\sin\theta E_3, \\
 e_2=\sin\alpha E_1-\sin\theta\cos\alpha E_2+\cos\theta\cos\alpha E_3, \\
 e_3=\cos\alpha E_1+\sin\theta\sin\alpha E_2-\cos\theta\sin\alpha E_3,
\end{cases}
\end{equation}
where $a_1^2=\cos\theta,\;a_1^3=\sin\theta,\;a_2^1=\sin\alpha,\;a_2^2=-\sin\theta\cos\alpha,\;a_2^3=\cos\theta\cos\alpha,\;a_3^1=\cos\alpha,\;a_3^2=\sin\theta\sin\alpha $\;and $a_3^3=-\cos\theta\sin\alpha$.\\
We applying  the 1st and the 3rd equation of (\ref{thb2}) to see that
\begin{equation}\label{ss7}
\begin{array}{lll}
e_1(\alpha)=-\sigma,\;e_1(\theta)=-\cos\theta.
\end{array}
\end{equation}
Since $a_1^3=\sin\theta,\;a_3^1=\cos\alpha$ and $\;a_3^2=\sin\theta\sin\alpha $, then  (\ref{ss5}) becomes
\begin{equation}\label{ss8}
\begin{array}{lll}
\sigma^2\cos\alpha +\sigma\sin\theta\sin\alpha -2\cos\alpha\sin^2\theta=0.
\end{array}
\end{equation}

One solves the  above equation to obtain
\begin{equation}\label{ss9}
\begin{array}{lll}
\sigma=\sin\theta\varphi(\alpha),
\end{array}
\end{equation}
where $\varphi(\alpha)=\frac{-\sin\alpha\pm\sqrt{1+7\cos^2\alpha}}{\cos\alpha}.$\\

Substituting $a_2^1=\sin\alpha,\;a_2^2=-\sin\theta\cos\alpha$, $a_3^1=\cos\alpha$, $f_3=0$  and (\ref{ss9})  into the 12th equation of (\ref{thb2}),  we have
\begin{equation}\label{ss10}
\begin{array}{lll}
\kappa_1=\sin\theta\psi(\alpha),
\end{array}
\end{equation}
where denote by $\psi(\alpha)=-\cos\alpha+\sin\alpha\varphi(\alpha)$ and $\varphi(\alpha)=\frac{-\sin\alpha\pm\sqrt{1+7\cos^2\alpha}}{\cos\alpha}.$\\

We substitute (\ref{ss9}) and (\ref{ss10}) into the 1st equation of  (\ref{RC2}) to have
\begin{equation}\label{ss11}
\begin{array}{lll}
e_1(\sigma)=2\kappa_1\sigma+2a_2^3a_3^3
=2\sin^2\theta\varphi(\alpha)\psi(\alpha)-2\cos^2\theta\cos\alpha\sin\alpha,
\end{array}
\end{equation}
where $\psi(\alpha)=-\cos\alpha+\sin\alpha\varphi(\alpha)(\alpha)$ and $\varphi(\alpha)=\frac{-\sin\alpha\pm\sqrt{1+7\cos^2\alpha}}{\cos\alpha}.$\\
On the other hand, substitute  (\ref{ss9}) into the left-hand side of  (\ref{ss11}) to  compute as
\begin{equation}\label{ss12}
\begin{array}{lll}
e_1(\sigma)=e_1\left(\sin\theta\varphi(\alpha)\right)
=\cos\theta\varphi(\alpha) e_1(\theta)+\sin\theta\varphi'(\alpha) e_1(\alpha)\\
=-\cos^2\theta\varphi(\alpha) -\sin^2\theta\varphi'(\alpha)\varphi(\alpha),
\end{array}
\end{equation}
the last holds by using (\ref{ss7}).\\

Comparing (\ref{ss11}) with (\ref{ss12}), we deduce
\begin{equation}\label{ss13}
\begin{array}{lll}
\sin^2\theta\{2\varphi(\alpha)\psi(\alpha)+\varphi'(\alpha)\varphi(\alpha)\}+\cos^2\theta\{-2\cos\alpha\sin\alpha+\varphi(\alpha)\}=0.
\end{array}
\end{equation}
Solving the above equation, we  obtain a contradiction. Indeed,  if $\theta$ is a constant, by the 2nd equation of (\ref{ss7}), we have $\cos\theta=0$ and $\sin\theta=\pm1$.  Substituting  this into the  above equation  we have
\begin{equation}\label{ss14}
\begin{array}{lll}
2\varphi(\alpha)\psi(\alpha)+\varphi'(\alpha)\varphi(\alpha)=0,\\
\end{array}
\end{equation}
this implies $\alpha$ being constant and hence $\sigma=0$ together with the 1st equation of (\ref{ss7}). Hence, using (\ref{ss8}), we get  $a_3^1=\cos\alpha=0$\; since \;$\sin\theta=\pm1$, a contradiction.\\

If $\theta\neq {\rm constant}$,  but the two functions $ \sin^2\theta$,\;\;$\cos^2\theta$ are linearly independent,  then  (\ref{ss13}) means
\begin{equation}\label{ss14}
\begin{array}{lll}
2\varphi(\alpha)\psi(\alpha)+\varphi'(\alpha)\varphi(\alpha)=0,\;{\rm and}\;
-2\cos\alpha\sin\alpha+\varphi(\alpha)=0,
\end{array}
\end{equation}
where $\psi(\alpha)=-\cos\alpha+\sin\alpha\varphi(\alpha)$ and $\varphi(\alpha)=\frac{-\sin\alpha\pm\sqrt{1+7\cos^2\alpha}}{\cos\alpha}.$\\
Clearly, the second equation of the above equation implies $\alpha$ being a constant and hence $\sigma=0$ together with the 1st equation of (\ref{ss7}).  Moreover, by  (\ref{ss8}) and $a_3^1=\cos\alpha\neq0$, one finds that  $\sin\theta=0$ and hence $\theta$ is a constant contradicting the assumption $\theta\neq {\rm constant}$.  From these, when $a_3^1\neq\pm1,0$, there is no a Riemannaian submersion $\pi:(\r^3,g_{Sol})\to (N^2,h)$
from Sol space no matter what $(N^2,h)$ is.\\
In addition, if $a_3^1=0$, we have $a_2^1=\pm1$ since $a_1^1=0$, in this case, a straightforward computation similar to those used computing  Case II in Theorem \ref{Sol} gives $f_1=f_3=0$, $f_2=-1$ and hence Gauss curvature of the base space $K^N=e_1(f_2)-f_2^2=-1$; if $a_3^1=\pm1$, we have $a_2^1=0$ since $a_1^1=0$, in this case, a direct calculation  similar to those used calculating  Case I in  Theorem \ref{Sol} gives $f_1=0$, $f_2=1$ and hence Gauss curvature of the base space $K^N=e_1(f_2)-f_2^2=-1$. Clearly, This implies that the a Riemannaian submersion  $\pi:(\r^3,g_{Sol})\to (N^2,h)$ exists only in $(\r^3,g_{Sol})\to H^2$ with Gauss curvature of the base space $K^N=-1$.\\
From which we obtain the theorem.

\end{proof}

\begin{theorem}\label{Sol}
There exists no biharmonic  Riemannaian submersion  $\pi:(\r^3,g_{Sol})\to (N^2,h)$
no matter what $(N^2,h)$ is.
\end{theorem}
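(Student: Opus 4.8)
The plan is to feed the rigidity of Theorem \ref{cl} into the biharmonic system of Proposition \ref{Pro1}. By Theorem \ref{cl} a Riemannian submersion $\pi:(\r^3,g_{Sol})\to(N^2,h)$ can exist only over $H^2$ with $K^N=-1$, and the argument there leaves exactly two possibilities for the position of the vertical line relative to $E_1$: Case I, $a_3^1=\pm1$ (i.e. $e_3=\pm E_1$), and Case II, $a_3^1=0$ (which forces $a_2^1=\pm1$, i.e. $e_2=\pm E_1$). So it suffices to produce, in each case, a genuinely adapted frame together with its complete integrability data $\{f_1,f_2,f_3,\kappa_1,\kappa_2,\sigma\}$, and then substitute into the two equations (\ref{pro1}).

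In Case I I would take $e_3=E_1$, so the horizontal plane is $\mathrm{span}\{E_2,E_3\}$, and choose the frame $e_1=E_3,\ e_2=-E_2$. Reading the brackets $[e_i,e_j]$ off the Sol relations (\ref{So1}) and comparing with the defining identities (\ref{R1}) gives at once $f_3=0$ (the frame is basic), $f_1=0$, $f_2=1$, $\sigma=0$, $\kappa_1=-1$, $\kappa_2=0$; all six data are constant. Hence every derivative term in (\ref{pro1}) vanishes, every term carrying $\kappa_2$ vanishes, and the first equation collapses to
\begin{equation}\nonumber
\kappa_1\left(-K^N+f_1^2+f_2^2\right)=(-1)(1+1)=-2\neq0,
\end{equation}
a contradiction. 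In Case II I would instead take $e_2=E_1$ and the frame $e_1=E_3,\ e_3=E_2$; the same reading of (\ref{So1})--(\ref{R1}) yields $f_3=0$, $f_1=0$, $f_2=-1$, $\sigma=0$, $\kappa_1=1$, $\kappa_2=0$, again constant, and the first equation of (\ref{pro1}) becomes $\kappa_1(-K^N+f_1^2+f_2^2)=(1)(1+1)=2\neq0$. Since Theorem \ref{cl} shows these exhaust all admissible submersions, no biharmonic Riemannian submersion from Sol space can exist.

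The substance of the proof, and its only delicate point, is the explicit determination of the integrability data in each configuration---in particular the vanishing of $\sigma$ and $\kappa_2$ and the value $\kappa_1=\pm1$---which is precisely the computation that the proof of Theorem \ref{cl} cites as ``Case I/II in Theorem \ref{Sol}''. I would carry it out from (\ref{So1})--(\ref{So2}) and (\ref{R1})--(\ref{R2}), taking care to check that the chosen frame is really adapted (i.e. $f_3=0$) and noting that a different basic rotation of the horizontal frame leaves biharmonicity unchanged, so the convenient constant-coefficient frame may be used without loss of generality. The obstruction is then conceptually transparent: any such fibre has unit geodesic curvature, $\kappa_1^2+\kappa_2^2=1$, while the base is forced to carry curvature $-1$, so the leading term $\kappa_1(-K^N+f_1^2+f_2^2)=2\kappa_1$ of the bitension field is a nonzero constant that the (vanishing) derivative terms cannot cancel. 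Once the data are seen to be constant the biharmonic system is immediate; essentially all the work lies in the frame computation, not in solving (\ref{pro1}).
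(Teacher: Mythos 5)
Your proposal is correct and follows essentially the same route as the paper: invoke Theorem \ref{cl} to reduce to the two configurations $e_3=\pm E_1$ and $e_2=\pm E_1$, compute the constant integrability data ($f_1=f_3=\kappa_2=\sigma=0$, $\kappa_1=-f_2=\mp1$) from (\ref{So1}) and (\ref{R1}), and observe that the first equation of (\ref{pro1}) reduces to $\kappa_1(-K^N+f_2^2)=\pm2\neq0$. Your frames differ from the paper's only by harmless sign choices, and your data and final contradiction match the paper's computation exactly.
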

\begin{proof}
Let $\nabla$  denote the Levi-Civita connection on Sol space $(\r^3,g_{Sol})$ with  an orthonormal frame $\{e_1,\; e_2, \;e_3\}$ and $e_3$ being  vertical. We denote by
$e_i=\sum\limits_{j=1}^{3}a_{i}^{j}E_j,\;i=1,2,3$. To complete the proof of the theorem, from Theorem \ref{cl}, we only  discuss   biharmonicity of a Riemannaian submersion  $\pi:(\r^3,g_{Sol})\to H^2$. Furthermore, from the proof of Theorem \ref{cl}, we only need to consider the two cases $a_3^1=\pm1$  or $a_3^1=0$. \\
Case I: $a_3^{1}=\pm1$.
In this case,  one sees that $e_3=\pm E_1$ and hence take  an orthogonal frame
$\{e_1=E_3,\; e_2=E_2, \;e_3= -E_1\}$  on $(\r^3,g_{Sol})$
with $e_3$  being  vertical. A direct computation using (\ref{So1}) and (\ref{So2}) gives
\begin{equation}\notag
\begin{array}{lll}
[e_1,e_2]=e_2,\;[e_1,e_3]=-e_3,\;[e_2,e_3]=0,\\
\nabla_{e_{2}} e_{1}=- e_{2},\;\;\nabla_{e_{2}} e_{2}= e_{1},\; \nabla_{e_{3}}
e_{1}=e_{3},\;\nabla_{e_{3}} e_{3}=-e_{1},\\
{\rm all\;other}\; \nabla_{e_{i}} e_{j}= 0,\;i,j=1,2,3.
\end{array}
\end{equation}
It follows that the ( generalized) integrability data $f_1=f_{3}=\kappa_2=\sigma=0, \kappa_1=-f_2=-1$ and hence $\{e_1=E_3,\; e_2=E_2, \;e_3= E_1\}$
is actually  adapted  to $\pi$ with $e_3$ being vertical. Then,  biharmonic equation (\ref{pro1}) reduces to
\begin{equation}\label{be1}
\begin{array}{lll}
 \Delta\kappa_1-\kappa_1\{-K^N+f_2^2\}=0.
\end{array}
\end{equation}
However, the left-hand term of  (\ref{be1}) can be computed as
\begin{equation}\label{be2}
\begin{array}{lll}
 \Delta\kappa_1-\kappa_1\{-K^N+f_2^2\}
 =\sum\limits_{i=1}^{3}\left(e_ie_i(\kappa_1)-\nabla_{e_i}e_i(\kappa_1)\right)-\kappa_1\{-e_1(f_2)+2f_2^2\}\\
 =0+1\times2=2\neq0.
\end{array}
\end{equation}
Therefore, the Riemannian submersion $\pi$ is not biharmonic in this case.\\

Case II: $a_3^1=0.$ In this case, we have $a_2^1=\pm 1$ since $a_1^1=0$. Then,  we can take an orthonormal frame $\{e_1=E_3,\; e_2=E_1,,
\;e_3=E_2\}$  with $e_3$ being vertical.  A direct computation using (\ref{So1}) and  (\ref{So2}) gives
\begin{equation}\label{th6}
\begin{array}{lll}
[e_1,e_2]=-e_2,\;[e_1,e_3]=e_3,\;[e_2,e_3]=0,\\
\nabla_{e_{2}} e_{1}=-e_{2},\;\nabla_{e_{2}} e_{2}=- e_{1},\;
\nabla_{e_{3}}
e_{1}=-e_{3},\; \nabla_{e_{3}} e_{3}=e_{1},\\
{\rm all\;other}\; \nabla_{e_{i}} e_{j}= 0,\;i,j=1,2,3.
\end{array}
\end{equation}
This follows that the ( generalized) integrability data $f_1=f_{3}=\kappa_2=\sigma=0, \kappa_1=-f_2=1$ and hence $\{e_1=E_3,\; e_2=E_1,,
\;e_3=E_2\}$ becomes adapted to $\pi$ with $e_3$ being vertical.
 Substituting this into biharmonic equation (\ref{pro1}) and a direct computation, we have
\begin{equation}\label{be3}\notag
\begin{array}{lll}
 0=\Delta\kappa_1-\kappa_1\{-K^N+f_2^2\}\\
 =\sum\limits_{i=1}^{3}\left(e_ie_i(\kappa_1)-\nabla_{e_i}e_i(\kappa_1)\right)-\kappa_1\{-e_1(f_2)+2f_2^2\}
 =0+1\times2=2,
\end{array}
\end{equation}
 which is a contradiction. Thus, the Riemannian submersion $\pi$ is not biharmonic.\\
Summarizing all results in the above cases we obtain the theorem.
 \end{proof}

\begin{remark}\label{re2}
 We would like to point out that, with respect to local coordinates, a Riemannaian submersion $\pi:(\r^3,g_{Sol})\to H^2$  can be locally expressed as  the following  (up to equivalence):\\
$(a)$: the Riemannaian submersion
 \begin{equation}\notag
\begin{array}{lll}
\pi:(\r^3,g_{Sol}= e^{2z}{\rm d}x^{2}+e^{-2z}{\rm d}y^{2}+{\rm
 d}z^{2})\to (\r^2, e^{-2z}{\rm d}y^{2}+{\rm
 d}z^{2}),\;\pi(x,y,z)=(y,z),
\end{array}
\end{equation}
or,\\
$(b)$: the Riemannaian submersion
\begin{equation}\notag
\begin{array}{lll}
\pi:(\r^3,g_{Sol}= e^{2z}{\rm d}x^{2}+e^{-2z}{\rm d}y^{2}+{\rm
 d}z^{2})\to (\r^2, e^{2z}{\rm d}x^{2}+{\rm
 d}z^{2}),\;
\pi(x,y,z)=(x,z).
\end{array}
\end{equation}
By Theorem \ref{HTh} and Theorem \ref{Sol}, these Riemannaian submersions are neither harmonic nor biharmonic.
\end{remark}

As a consequence of Theorem \ref{HTh} and Theorem \ref{Sol}, we state the following fact
\begin{corollary}\label{CSol2}
 Any Riemannaian submersion $\pi:(\r^3,g_{Sol})\to (N^2,h)$
from Sol space to a surface is neither harmonic nor biharmonic.
\end{corollary}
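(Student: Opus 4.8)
The plan is to exploit the heavy structural reduction already carried out in Theorem \ref{cl}: any Riemannian submersion $\pi:(\r^3,g_{Sol})\to(N^2,h)$ can exist only over the hyperbolic plane with $K^N=-1$, and, once the adapted frame has been normalized as in Theorem \ref{Cla}, the \emph{only} admissible configurations are $a_3^1=\pm1$ and $a_3^1=0$; every other value of $a_3^1$ has already been ruled out. Consequently, I would not attempt to solve the full biharmonic system (\ref{pro1}) in general. Instead I would test biharmonicity only in these two surviving cases, in each of which the geometry pins down a completely explicit orthonormal frame with \emph{constant} integrability data, so that the bitension field collapses to a single algebraic number.

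For the first case $a_3^1=\pm1$, the vertical field $e_3$ coincides with $E_1$ up to sign, so I would take the explicit frame $\{e_1=E_3,\,e_2=E_2,\,e_3=-E_1\}$. Computing the Lie brackets and the Levi-Civita connection directly from (\ref{So1}) and (\ref{So2}), I read off the integrability data $f_1=f_3=\kappa_2=\sigma=0$ and $\kappa_1=-f_2=-1$, all constant. Substituting into (\ref{pro1}), the second equation is satisfied trivially since every summand vanishes, while the first reduces, after the Laplacian term drops out (its argument $\kappa_1$ being constant), to the purely algebraic $\kappa_1(-K^N+f_2^2)$. With $K^N=-1$ and $f_2^2=1$ this equals a nonzero constant ($\pm2$), the desired contradiction.

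For the second case $a_3^1=0$, the condition $a_1^1=0$ forces $a_2^1=\pm1$, and the natural adapted frame is $\{e_1=E_3,\,e_2=E_1,\,e_3=E_2\}$. The computation is entirely parallel: (\ref{So1}) and (\ref{So2}) again yield $f_1=f_3=\kappa_2=\sigma=0$ and $\kappa_1=-f_2=1$, all constant, so substitution into (\ref{pro1}) once more reduces the first equation to $\kappa_1(-K^N+f_2^2)=\pm2\neq0$, while the second equation holds identically. Hence no biharmonic submersion occurs here either, and since Theorem \ref{cl} leaves no configurations beyond these two, the theorem follows.

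The step I expect to be the crux is not the final computation but the bookkeeping that precedes it: correctly identifying the two explicit frames from the constraint $a_3^1\in\{0,\pm1\}$ and reading off $\kappa_1$ and $f_2$ with the right signs from (\ref{So1})--(\ref{So2}). It is precisely the fact that $\kappa_1=\pm1$ is a nonzero constant and $f_2$ is constant that makes the Laplacian term vanish and forces the algebraic remainder $\kappa_1(-K^N+f_2^2)$ to be nonzero; an error in these data is the only thing that could spuriously make the bitension field vanish. In other words, essentially all of the difficulty has already been absorbed into Theorems \ref{Cla} and \ref{cl}, and the remaining verification is a short, sign-sensitive but otherwise routine substitution into the biharmonicity criterion of Proposition \ref{Pro1}.
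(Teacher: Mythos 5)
Your computations are correct, and for the biharmonic half they coincide case-for-case with the paper's own proof of Theorem \ref{Sol}: the same two frames $\{e_1=E_3,\,e_2=E_2,\,e_3=-E_1\}$ and $\{e_1=E_3,\,e_2=E_1,\,e_3=E_2\}$, the same constant integrability data ($f_1=f_3=\kappa_2=\sigma=0$ with $\kappa_1=-f_2=-1$, resp.\ $\kappa_1=-f_2=1$), and the same reduction of (\ref{pro1}) to the nonzero constant $\kappa_1(-K^N+f_2^2)=\mp2$, the second equation of (\ref{pro1}) holding trivially. The structural input you invoke (Theorem \ref{cl}, together with the fact that its proof leaves only the configurations $a_3^1=0$ and $a_3^1=\pm1$) is exactly what the paper invokes there as well.

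However, as written your proposal proves only that no \emph{biharmonic} Riemannian submersion exists, whereas the corollary also asserts non-existence of \emph{harmonic} Riemannian submersions, and your text never addresses this. The paper assembles the corollary from two separate results: Theorem \ref{Sol} (the argument you reproduce) and Theorem \ref{HTh}, whose proof uses a different mechanism (Proposition \ref{PH} forces $\kappa_1=\kappa_2=0$, and the curvature identities (\ref{RC0}) then yield $\sigma^2=-1$, a contradiction). Your route can absorb the harmonic half in one line: a harmonic map has $\tau(\pi)=0$, hence by (\ref{BT1}) its bitension field vanishes, so every harmonic map is biharmonic, and therefore your non-existence of biharmonic Riemannian submersions from Sol space immediately rules out harmonic ones as well. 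With that sentence added, your proof is self-contained and in fact more economical than the paper's, since it bypasses Theorem \ref{HTh} entirely; without it, your argument establishes only half of the stated corollary.
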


Although there is no (harmonic) biharmonic Riemannaian submersion from Sol space to a surface,  there exist many (harmonic) biharmonic maps $(\r^3,g_{Sol})\to (N^2,h)$ which are not  Riemannaian submersions.
\begin{example}
The maps $\phi:(\r^3,g_{Sol}= e^{2z}{\rm d}x^{2}+e^{-2z}{\rm d}y^{2}+{\rm
 d}z^{2})\to (\r^2,du^2+dv^2)$,\\
$\phi(x,y,z)=(u,v)=(y,Az^3+Bz^2+Cz+D)$
are biharmonic,  where $A, B, C, D$ are constants. In particular, when $A^2+B^2> 0$, this family of maps are proper biharmonic.
 Note that these maps are not  Riemannaian submersions.
\end{example}


\begin{thebibliography}{99}
\bibitem{AO} M. A. Akyol and Y. -L. Ou, {\em Biharmonic Riemannian submersions}, Annali di Matematica Pura ed Applicata, (2019) 198:559-570.
\bibitem{B1} A. Balmus, {\em Perspectives on biharmonic maps and submanifolds}, in: Jess A. Alvarez Lopez, Eduardo Garcia-Rio (Eds.), Differential Geometry, Proceedings of the VIII International Colloquium, World Sci. Publ, Hackensack, NJ, 2009, pp. 257-265.
\bibitem{BW1} P. Baird and  J. C. Wood, {\em Harmonic morphisms between Riemannian manifolds}, London Math. Soc. Monogr.
(N.S.) No. 29, Oxford Univ. Press (2003).
\bibitem{BW2} P. Baird and J. C. Wood, {\em Bernstein theorems for harmonic morphisms from $R^3$ and $S^3$}, Math. Ann.
280 (1988), no. 4, 579--603.

\bibitem{CHG} S.S. Chern, S.I. Goldberg, {\em On the volume-decreasing property of a class of real harmonic mappings}, Amer. J. Math. 97 (1975) 133-147.
\bibitem{CH} B.Y. Chen and S. Ishikawa, {\em Biharmonic pseudo-Riemannian submanifolds in pseudo-Euclidean spaces}, Kyushu J. Math. 52 (1) (1998) 167-185

\bibitem{EL1} J. Eells and L. Lemaire, Selected Topics in Harmonic Maps, CBMS, Regional Conference Series in Math., Amer. Math. Soc., 50, 1983.
\bibitem{GO} E. Ghandour and Y.-L. Ou, {\em Generalized harmonic morphisms and horizontally weakly conformal biharmonic maps}. J. Math. Anal. Appl. 464(1),(2018), 924-938.
\bibitem{Ji} G. Y. Jiang, {\em $2$-Harmonic maps and their first and second variational formulas}, Chin. Ann. Math. Ser. A 7(1986)
389-402.
\bibitem{Ji2} G. Y. Jiang, {\em Some non-existence theorems of $2$-harmonic isometric immersions into Euclidean spaces
 }, Chin. Ann. Math. Ser. 8A (1987) 376-383.
 \bibitem{MO} S. Montaldo, C. Oniciuc, {\em A short survey on biharmonic maps between Riemannian manifolds}, Rev. Un. Mat. Argentina 47 (2) (2006) 1--22. 2007.

\bibitem{On2} B. O'Neill, {\em Submersions and geodesies}, Duke Math. J. 34 (1967), 459-469.
\bibitem{Oni} C. Oniciuc,  {\em  Biharmonic maps between Riemannian manifolds},  An. Stiint. Univ. Al. I. Cuza Iasi. Mat.
(N.S.) 48(2), 237-248 (2002)
 \bibitem{Ou1} Y. -L. Ou, {\em p-Harmonic morphisms, biharmonic morphisms,and nonharmonic biharmonic maps}, J. Geom. Phys. 56 (2006), 358-374.
 \bibitem{Ou7} Y. -L. Ou and B. -Y. Chen, {\em Biharmonic submanifolds and biharmonic maps in Riemannian Geometry},  World Scientifc Publishing Co. Pte. Ltd., 2020.
 \bibitem{Sm} F. Smith, {\em On the Existence of Embedded Minimal 2-Spheres in the 3-Sphere, Endowed with an arbitrary metric}, Thesis, University of Melbourne, 1983.
 \bibitem{Ura1} H.  Urakawa, {\em   Harmonic maps and biharmonic Riemannian submanifolds}, arXiv:1809.10814v1 [math.DG] 28 Sep 2018.
\bibitem{Ura2} H.  Urakawa, {\em   Harmonic maps and biharmonic maps  on principal bundles and warped products}, J. Korean Math. Soc. 55 (2018), No. 3, pp. 553-574.
\bibitem{Ra}  A. Ranjan, {\em Riemannian submersions of spheres with totally geodesic fibers }, Osaka J. Math. 22 (1985), 243 260.
 \bibitem{WO} Z. -P. Wang and Y. -L. Ou,  {\em Biharmonic Riemannian submersions from
 3-manifolds}, Math. Z. (2011) 269:917-925.
\bibitem{WO1} Z. -P. Wang and Y. -L. Ou, {\em Biharmonic Riemannian submersions from a 3-dimensional BCV space}, preprint, 2023.
\bibitem{WO2} Z. -P. Wang and Y. -L. Ou, {\em Biharmonic isometric immersions into and biharmonic Riemannian submersions  from
  Berger 3-spheres}, preprint, 2023.
\bibitem{WO3} Z. -P. Wang and  Y. -L. Ou, {\em Biharmonic isometric immersions into and biharmonic Riemannian submersions  from
  $M^2\times\r$}, preprint, 2023.
\bibitem{WO4} Z. -P. Wang, Y. -L. Ou and Y.-G. Luo, {\em Harmonic Riemannian submersions from 3-dimensional geometries}, preprint, 2023.





\end{thebibliography}
\end{document}